\documentclass[a4paper, 12pt]{article}%

\usepackage{amsmath}
\usepackage{amsfonts}
\usepackage{amssymb}
\usepackage{amsthm}
\usepackage{graphicx}
\usepackage{titlesec}
\usepackage{tikz}
\usepackage{algorithm2e}

\newtheorem{theorem}{Theorem}
\newtheorem{lemma}[theorem]{Lemma}
\newtheorem{definition}[theorem]{Definition}
\newtheorem{remark}[theorem]{Remark}
\newtheorem{proposition}[theorem]{Proposition}

\newcommand{\tri}{\mathcal{T}}
\newcommand{\Tau}{\mathcal{T}}
\newcommand{\triH}{\tri_H}

\newcommand{\dx}{\operatorname*{d}\hspace{-0.3ex}x}

\newcommand{\diam}{\operatorname*{diam}}
\newcommand{\support}{\operatorname*{supp}}

\newcommand{\Inodal}{\mathfrak{I}_{H}}

\newcommand{\umshk}{u^{\operatorname*{ms},k}_{H,h}}
\newcommand{\umshkn}{u^{\operatorname*{ms},k,(n)}_{H,h}}
\newcommand{\vmshk}{v^{\operatorname*{ms},k}_{H,h}}
\newcommand{\wmshk}{w^{\operatorname*{ms},k}_{H,h}}
\newcommand{\umsh}{u^{\operatorname*{ms}}_{H,h}}
\newcommand{\tildeumsh}{\tilde{u}^{\operatorname*{ms}}_{H,h}}
\newcommand{\vms}{v^{\operatorname*{ms}}_H}

\newcommand{\ums}{u^{\operatorname*{ms}}}
\newcommand{\umsn}{u^{\operatorname*{ms},(n)}}

\newcommand{\wfine}{w^{\operatorname*{f}}}

\newcommand{\Cint}{C_{\Inodal}}

\newcommand{\Vms}{V^{\operatorname*{ms}}}

\newcommand{\VmsHh}{V^{\operatorname*{ms}}_{H,h}}
\newcommand{\VmsHk}{V^{\operatorname*{ms}}_{H,k}}
\newcommand{\VmsHkh}{V^{\operatorname*{ms},k}_{H,h}}
\newcommand{\Vf}{V^{\operatorname*{f}}}

\newcommand{\R}{\mathbb{R}}

\newtheorem{assumption}{Assumption}



\begin{document}


\begin{center}
{\LARGE A localized orthogonal decomposition method\\[0.5em] for semi-linear elliptic problems}\\[2em]
\end{center}

\renewcommand{\thefootnote}{\fnsymbol{footnote}}
\renewcommand{\thefootnote}{\arabic{footnote}}

\begin{center}
{\large Patrick Henning\footnote[1]{Department of Information Technology, Uppsala University, Box 337, SE-751 05 Uppsala, Sweden}$\hspace{0pt}^{\ast}$,
 Axel M\r{a}lqvist$\hspace{0pt}^{\hspace{1pt}1}$\renewcommand{\thefootnote}{\fnsymbol{footnote}}\setcounter{footnote}{0}
 \hspace{-3pt}\footnote{A. M\r{a}lqvist and P. Henning are supported by The G\"{o}ran Gustafsson Foundation and The Swedish Research Council.},
 \renewcommand{\thefootnote}{\arabic{footnote}}\setcounter{footnote}{2}
 Daniel Peterseim\footnote[2]{Institut f\"ur Mathematik, Humboldt-Universit\"at zu Berlin, Unter den Linden 6, D-10099 Berlin, Germany}\renewcommand{\thefootnote}{\fnsymbol{footnote}}\setcounter{footnote}{1}
 \hspace{-3pt}\footnote{D. Peterseim is supported by the DFG Research Center Matheon Berlin through project C33.}}\\[2em]
\end{center}

\begin{center}
{\large{\today}}
\end{center}

\begin{center}
\end{center}

\begin{abstract}
In this paper we propose and analyze a localized orthogonal decomposition (LOD) method for solving semi-linear elliptic problems with heterogeneous and highly variable coefficient functions. For this purpose we construct a generalized finite element basis that spans a low dimensional multiscale space. The basis is assembled by performing localized linear fine-scale computations on small patches that have a diameter of order $H\log(H^{-1})$ where $H$ is the coarse mesh size. Without any assumptions on the type of the oscillations in the coefficients, we give a rigorous proof for a linear convergence of the $H^1$-error with respect to the coarse mesh size. To solve the arising equations, we propose an algorithm that is based on a damped Newton scheme in the multiscale space.
\end{abstract}

\paragraph*{Keywords}
finite element method, a priori error estimate, convergence, multiscale method, non-linear, computational homogenization, upscaling

\paragraph*{AMS subject classifications}
35J15, 65N12, 65N30

\section{Introduction}

This paper is devoted to the numerical approximation of solutions of semi-linear elliptic problems with rapidly oscillating and highly varying coefficient functions. We are concerned with the following type of equations:
\begin{align*}
- \nabla \cdot (A\nabla u) +F(u,\nabla u) = g.
\end{align*}
Here, we prescribe a (zero-) Dirichlet boundary condition for $u$, $A$ is a highly variable diffusion matrix and $F$ is a highly variable nonlinear term that typically describes advective and reactive processes. In particular, we have a linear term of second order and nonlinear terms of order $1$ and $0$.
A typical application is the stationary (Kirchhoff transformed) Richards equation that describes the groundwater flow in unsaturated soils (c.f. \cite{Alt:Luckhaus:1983,Berninger:2009,Berninger:Kornhuber:Sander:2007}). The equation reads
\begin{align*}
\nabla \cdot ( K \nabla u ) - \nabla \cdot ( K \hspace{2pt} kr(M(u)) \vec{g}) = f,
\end{align*}
where $u$ is the so-called generalized pressure, $K$ is the hydraulic conductivity in the soil, $kr$ the relative permeability depending on the saturation, $M$ is some nonlinearity arising from the Kirchhoff transformation and $\vec{g}$ denotes the gravity vector. If we add an infiltration process, the equation receives an additional nonlinear reaction term.

The numerical treatment of such equations is often complicated and expensive. Due to the high variability of the coefficient functions, one requires extremely fine computational grids that are able to capture all the fine scale oscillations. Using standard methods such as Finite Element or Finite Volume schemes, this results in systems of equations of enormous size and therefore in a tremendous computational demand that can not be handled in a lot of scenarios. 

There is a large variety of so-called multiscale methods that are able to overcome this difficulty by decoupling the fine scale computations into local parts. This decreases the computational demand without suffering from a remarkable loss in accuracy. Examples of multiscale methods are the Heterogeneous Multiscale Method (HMM) by E and Engquist \cite{E:Engquist:2003} and the Multiscale Finite Element Method (MsFEM) proposed by Hou and Wu \cite{Hou:Wu:1997}. Both methods fit into a common framework and are strongly related to numerical homogenization (c.f. \cite{Gloria:2006,Henning:Ohlberger:2011_2,Henning:2012}). HMM and MsFEM are typically not constructed for a direct approximation of exact solutions but for homogenized solutions and corresponding correctors instead. This implies that they are only able to approximate the exact solution up to a modeling error that depends on the homogenization setting (c.f. \cite{Gloria:2006}). General proofs of convergence are therefore hard to achieve. 

We are concerned with a multiscale method that is based on the concept of the Variational Multiscale Method (VMM) proposed by Hughes et al. \cite{Hughes:et-al:1998}. In comparison to HMM and MsFEM, the VMM aims to a direct approximation of the exact solution without suffering from a modeling error remainder arising from homogenization theory. The key idea of the Variational Multiscale Method is to construct a splitting of the original solution space $V$ into the direct sum of a low dimensional space for coarse grid approximations and high dimensional space for fine scale reconstructions. In this work, we consider a modification and extension of this idea that was developed in \cite{LaMa07,Malqvist:2011} and that was explicitly proposed in \cite{MaPe12}. Here, the splitting is such that we obtain an accurate but low dimensional space $V^{\operatorname*{ms}}$ (where we are looking for our fine scale approximation instead of an approximation of a coarse part) and a high dimensional residual space $V^{\operatorname*{f}}$. The construction of $V^{\operatorname*{ms}}$ involves the computation of one fine scale problem in a small patch per degree of freedom. Mesh adaptive versions of the VMM with patch size control that can be also applied to this multiscale method were achieved in \cite{LaMa07,Larson:Malqvist:2009,Larson:Malqvist:2009_2,Nordbotten:2008}. The first rigorous proof of convergence was recently obtained in \cite{MaPe12} for linear diffusion problems.

In this contribution, we present an efficient way of handling semi-linear elliptic multiscale problems in the modified VMM framework, including a proof of convergence based on the techniques established in \cite{MaPe12}. Even though the original problem is nonlinear, the fine scale computations are purely linear problems that can be solved in parallel. The main result of this article is the optimal convergence of the $H^1$-error between exact solution $u$ and its multiscale approximation $u^{\operatorname{ms}}_H$. We show that, if the patch size is of order $H\log(H^{-1})$, the following error bound holds true:
\begin{equation*}
 \| u-u^{\operatorname{ms}}_H \|_{H^1(\Omega)} \leq C H.
\end{equation*}
Here, $C$ denotes a generic constant independent of the mesh size of the computational grid or the oscillations of $A$ and $F$.

$\\$
The paper is structured as follows. In Section \ref{s:setting} we introduce the setting of this paper, including the assumptions on the considered semi-linear problem. In Section \ref{s:locbasis} we present and motivate our method and we state the corresponding optimal convergence result. The result itself is proved in Section \ref{s:error}. Finally, to solve the arising nonlinear equations, we propose an algorithm that is based on a damped Newton scheme in the multiscale space. This is done in Section \ref{s:newtonms}. In Section \ref{s:numericalexperiments} we emphasize our theoretical results by a numerical experiment.

\section{Setting}
\label{s:setting}

Let $\Omega\subset\mathbb{R}^{d}$ be a bounded Lipschitz domain with polyhedral boundary, let $V:=H_{0}^{1}( \Omega )$ and let $A\in L^{\infty}(\Omega,\mathbb{R}^{d \times d}_{\mathrm{sym}})$ denote a matrix valued function with uniformly strictly positive eigenvalues. We assume that the space $H_{0}^{1}( \Omega )$ is endowed with the $H^1$-semi norm given by $|v|_{H^1(\Omega)}:=\| \nabla v\|_{L^2(\Omega)}$ (which is equivalent to the common $H^1$-norm in $H^1_0(\Omega)$). By $ \langle \cdot, \cdot  \rangle := (\cdot,\cdot)_{L^2(\Omega)}$ we denote the inner product in $L^2(\Omega)$ and $F: \Omega \times \R \times \R^d \rightarrow \R$ is a nonlinear measurable function.

$\\$
For a given source term $g \in L^2(\Omega)\subset H^{-1}(\Omega)$ we are concerned with the problem to find $u\in H^1_0(\Omega)$ (i.e. with a homogeneous Dirichlet boundary condition) with
\begin{align}
\label{originalproblem}
\langle A\nabla u,\nabla v\rangle+\langle F(\cdot,u,\nabla u),v\rangle =\langle g, v\rangle
\end{align}
for all test functions $v\in H^1_0(\Omega)$. To simplify the notation, we define the operator $B : H^1_0(\Omega) \rightarrow H^{-1}(\Omega)$ by
\begin{align*}
\langle B(v), w \rangle_{H^{-1},H^1_0} := \langle A\nabla v,\nabla w\rangle+\langle F(\cdot,v,\nabla v),w\rangle \quad \mbox{for} \enspace v,w \in H^1_0(\Omega),
\end{align*}
where $\langle \cdot, \cdot \rangle_{H^{-1},H^1_0}$ denote the dual pairing in $H^1_0(\Omega)$. Here, $A$ is a rapidly oscillating, highly heterogeneous diffusion matrix. $F(\cdot,\xi,\zeta)$ is also allowed to be rapidly oscillating, without further assumptions on the type of the oscillations. 

We assume that $A$ and $F$ are of the same order of magnitude. 
This prohibits for instance advection dominated processes, which must be treated with a different approach and requires a different set of multiscale basis functions than the one proposed in this paper. In particular, we will show that if the $F$-term does not dominate the equation, it is sufficient to construct a multiscale spaced only based on the oscillations of $A$. This implies that we only need to solve linear problems on the fine scale.
If $A$ and $F$ are not of the same size, the proposed method needs modifications with respect to the construction of the multiscale basis. Typical examples where the lower term is dominant are models for transport of solutes in groundwater where we deal with extremely large P\'eclet numbers and a corresponding scaling of the advective terms. In this case, the oscillations of $F$ become significant to obtain accurate upscaled and homogenized properties (c.f. \cite{Henning:Ohlberger:2010,Henning:Ohlberger:2011}).

$\\$
For the subsequent analytical considerations and in order to guarantee a unique solution of (\ref{originalproblem}), we make the following assumptions:
\begin{assumption}
We assume:
\begin{itemize}
\item[(A1)] $A\in L^\infty\left(\Omega,\mathbb{R}_{\mathrm{sym}}^{d\times d}\right)$ with 
\begin{equation*}%
\infty>\beta :=\|A\|_{L^{\infty}(\Omega)}=\underset{x\in\Omega}{\operatorname{ess}\sup}%
\sup\limits_{\zeta\in\mathbb{R}^{d}\setminus\{0\}}\dfrac{A( x) \zeta \cdot \zeta
}{|\zeta|^2}.
\end{equation*}
and there exists $\alpha$ such that
\begin{equation*}%
0<\alpha :=\underset{x\in\Omega}{\operatorname{ess}\inf}%
\inf\limits_{\zeta\in\mathbb{R}^{d}\setminus\{0\}}\dfrac{ A( x) \zeta \cdot \zeta
}{ |\zeta|^2},
\end{equation*}
\item[(A2)] There exist $L_1,L_2 \in \R_{>0}$ such that uniformly for almost every $x$ in $\Omega$:
\begin{align*}
 |F(x,\xi_1,\zeta) - F(x,\xi_2,\zeta)| &\le L_1 |\xi_1 - \xi_2|, \quad \mbox{for all } \zeta \in \R^d, \enspace \xi_1,\xi_2 \in \R,\\
 |F(x,\xi,\zeta_1) - F(x,\xi,\zeta_2)| &\le L_2 |\zeta_1 - \zeta_2|, \quad \mbox{for all } \zeta_1, \zeta_2 \in \R^d, \enspace \xi \in \R,\\
  F(x,0,0) &= 0.
\end{align*}
\item[(A3)] 
$B$ is strongly monotone, i.e. there exist $c_0>0$ so that for all $u,v \in H^1_0(\Omega)$:
\begin{align}
\label{strong-monotonicity}\langle B(u) - B(v), u-v \rangle_{H^{-1},H^1_0} &\ge c_0 | u - v|_{H^1(\Omega)}^2.
\end{align}
\end{itemize}
\end{assumption}
Under assumptions (A1)-(A3), the Browder-Minty theorem (c.f. \cite{Ruzicka:Book:2004}, Section 3, Theorem 1.5 therewithin) yields a unique solution of problem (\ref{originalproblem}).

Typically, the validity of assumption (A3) can be checked by looking at the properties of the nonlinear function $F$. For instance, if there exists a constant $\alpha_0 \ge 0$, such that $\partial_{\xi}F(x,\xi,\zeta) \ge \alpha_0$ for all $\zeta$ and almost every $x$ (i.e. $F(x,\cdot,\zeta)$ is monotonically increasing) and if $\alpha_0$ and $L_2$ are such that $L_2\le 2 \alpha_0$ and $L_2 < 2 \alpha$ then (A3) is fulfilled. This can be checked by a simple calculation:
\begin{eqnarray*}
\lefteqn{\langle B(u) - B(v), u-v \rangle_{H^{-1},H^1_0}}\\
&\ge& \alpha ||  \nabla u -  \nabla v ||_{L^2(\Omega)}^2 + \alpha_0 ||  u - v ||_{L^2(\Omega)}^2 - L_2 (|  u - v |, |  \nabla u -  \nabla v |)_{L^2(\Omega)} \\
&\ge& (\alpha - \frac{L_2}{2}) ||  \nabla u -  \nabla v ||_{L^2(\Omega)}^2 + (\alpha_0- \frac{L_2}{2}) ||  u - v ||_{L^2(\Omega)}^2. 
\end{eqnarray*}
\begin{remark}
Let the $C_p$ denote the constant appearing in the Poincar\'e-Friedrichs inequality for $H^1_0(\Omega)$ functions. Observe that (A1)-(A3) imply that the solution $u\in H^1_0(\Omega)$ of (\ref{originalproblem}) fulfills
\begin{eqnarray}
\label{bound_F_u_grad_u} \nonumber\lefteqn{\| F(u,\nabla u)\|_{L^2(\Omega)} \le \| F(u,\nabla u) - F(0,\nabla u)\|_{L^2(\Omega)} + \| F(0,\nabla u) - F(0,0)\|_{L^2(\Omega)}}\\
&\le& (L_1 C_p + L_2) |u|_{H^1(\Omega)} \le C_p\frac{L_1 C_p + L_2}{c_0 } \|g\|_{L^2(\Omega)}.\hspace{130pt}
\end{eqnarray}
\end{remark}
Note that problem (\ref{originalproblem}) also covers equations such as
\begin{equation*}
-\nabla\cdot\left( \kappa(u)A\nabla u \right)=F(u,\nabla u),
\end{equation*}
for a strictly positive and sufficiently regular function $\kappa$ (independent of $x$). In this case, the equation can be rewritten as
\begin{equation*}
-\nabla\cdot A\nabla u=\tilde{F}(u,\nabla u).
\end{equation*}

In the remainder of this paper, we use the notation $q_1 \lesssim q_2$ if $q_1 \le C q_2$ where $C>0$ is a constant that only depends on the shape regularity of the mesh, but not on the mesh size. Dependencies such as $(L_1+L_2)\alpha^{-1}$ are always explicitly stated. Dependencies on the contrast $\frac{\beta}{\alpha}$ are allowed to be contained in $\lesssim$.
\section{A Rigorous Multiscale Method}\label{s:locbasis}

In this section we suggest a local orthogonal decomposition multiscale method (LOD) that is based on the concept introduced by Hughes et al. \cite{Hughes:et-al:1998} and the specific constructions proposed in \cite{LaMa07,Malqvist:2011} for linear problems. The required multiscale (MS) basis functions are obtained with the strategy established in \cite{MaPe12}.

The main idea of the Variational Multiscale Method is to start from a finite element space $\mathcal{V}_h$ with a highly resolved computational grid and to construct a splitting of this space into the direct sum $\mathcal{V}_h=\mathcal{V}^l \oplus \mathcal{V}^{\operatorname*{f}}$ of a low dimensional space $\mathcal{V}^{\operatorname*{l}}$ and a 'detail space' $\mathcal{V}^{\operatorname*{f}}$ containing all the missing oscillations. Then, a basis of $\mathcal{V}^l$ is assembled and we can compute a Galerkin approximation $u_l$ of $u$ in $\mathcal{V}^{\operatorname*{l}}$. However, the success of this approach strongly depends on the choice of $\mathcal{V}^l$. On the one hand, the costs for assembling a basis of $\mathcal{V}^{\operatorname*{l}}$ must be kept low. On the other hand, the basis functions somehow need to contain information about fine scale features. For instance, a standard coarse finite element space is cheap to assemble but will fail to yield reliable approximations. On the contrary, the space spanned by high resolution finite element approximations yields perfect approximations,
but is as costly as the original problem that we tried to avoid. Therefore, the key is to find an optimal balance between costs and accuracy. In previous works (c.f. \cite{Hughes:et-al:1998,LaMa07,Larson:Malqvist:2009}) the multiscale basis (MS-basis) of $\mathcal{V}^l$ was constructed involving the full multiscale operator $B$ that corresponds with the left hand side of the original problem. In a fully linear setting, this can be a reasonable choice. However, it gets extremely expensive if $B$ is a nonlinear operator, since it leads to numerous nonlinear equations to solve. Furthermore it is not clear if the constructed set of basis functions leads to good approximations. One novelty of this work is that we do not involve the full operator $B$ in the construction of the MS-basis, but only the linear diffusive part  $\langle A \nabla \cdot, \nabla \cdot \rangle$. Even though the oscillations of $F$ are not captured by the MS-basis, we can show that we are still able to obtain accurate approximations and to preserve the optimal convergence rates.

\subsection{Notation and discretization}\label{ss:classical}
Let $\triH$ denote a regular triangulation of $\Omega$ and let $H:\overline\Omega\rightarrow\mathbb{R}_{>0}$ denote the $\triH$-piecewise constant mesh size function with $H\vert_T=H_T:=\diam(T)$ for all $T\in\triH$. Additionally, let $\mathcal{T}_h$ be a regular triangulation of $\Omega$ that is supposed to be a refinement of $\mathcal{T}_H$. We assume that $\mathcal{T}_h$ is sufficiently small so that all fine scale features of $B$ are captured by the mesh.  The mesh size $h$ denotes the maximum diameter of an element of $\mathcal{T}_h$.
The corresponding classical (conforming) finite element spaces of piecewise polynomials of degree $1$ are given by
\begin{align*}
V_H&:=\{v_H\in C^0(\bar{\Omega})\cap H^1_0(\Omega)\;\vert\;\forall T\in\triH:\enspace (v_H)\vert_T \text{ is affine}\},\\
V_h&:=\{v_h\in C^0(\bar{\Omega})\cap H^1_0(\Omega)\;\vert\;\forall K\in\mathcal{T}_h: \enspace (v_h)\vert_K \text{ is affine}\}.
\end{align*}
By $J$, we denote the dimension of $V_H$ and by $\mathcal{N}_H=\{ z_j| \hspace{2pt} 1 \le j \le J\}$ the set of interior vertices of $\triH$. For every vertex $z_j \in\mathcal{N}_H$, let $\lambda_j\in V_H$ denote the associated nodal basis function (tent function), i.e. $\lambda_j \in V_H$ with the property $\lambda_j(z_i)=\delta_{ij}$ for all $1\le i,j \le J$.

From now on, we denote by $u_h \in V_h$ the classical finite element approximation of $u$ in the discrete (highly resolved) space $V_h$, i.e. $u_h \in V_h$ solves
\begin{align}
\label{high-resolution-fem-equation}\int_{\Omega} A \nabla u_h \cdot \nabla v_h + F( \cdot, u_h, \nabla u_h) v_h = \int_{\Omega} g v_h
\end{align}
for all $v_h \in V_h$. We assume that $V_h$ resolves the micro structure, i.e. that the error $\| u - u_h \|_{H^1(\Omega)}$ becomes sufficiently small by falling below a given tolerance.
For standard finite elements methods the error can be typically bounded by $C\cdot h^s$ with $s\ge \frac{1}{2}$. However, for regular coefficients, $C$ depends on the derivative of $A$ and $F$ with respect to the spatial variable. If $A$ and $F$ are rapidly oscillating, the derivative becomes very large and $h$ must be very small to equalize the dominance of $C$. This is only fulfilled, when $h$ resolves the micro structure (we refer 
to \cite{PS11-MMS} and \cite{P11-NHM} for some qualitative characterization of this so-called resolution condition). We are therefore dealing with pre-asymptotic effects for the standard methods. The multiscale method that we propose in the subsequent sections will be constructed to converge to $u_h$, with a linear speed in $H$ and with a generic constant that is not affected by the fine scale oscillations, i.e. in particular we do not have such pre-asymptotic effects.

\subsection{Quasi Interpolation}\label{ss:intpol}
The key tool in our construction is a linear (quasi-)interpolation operator $\Inodal: V_h\rightarrow V_H$ that is continuous and surjective. The kernel of this operator is going to be our fine space (or remainder space) $\Vf_h$. In \cite{MaPe12} a weighted Cl\'ement interpolation operator was used. In this work, we do not specify the choice. Instead, we state a set of assumptions that must be fulfilled in order to derive an optimal convergence result for the constructed multiscale method.
\begin{assumption}[Assumptions on the interpolation]
We make the following assumptions on the interpolation operator $\Inodal: V_h\rightarrow V_H$:
\begin{itemize}
\item[(A4)] $\Inodal \in L(V_h,V_H)$, i.e. $\Inodal$ is linear,
\item[(A5)] the restriction of $\Inodal$ to $V_H$ is an isomorphism, in particular there holds $(\Inodal \circ \Inodal^{-1})(v_H)=v_H$ for all $v_H \in V_H$,
\item[(A6)] there exists a generic constant $\Cint$, only depending on the shape regularity of $\Tau_H$ and $\Tau_h$, such that for all $v_h\in V_h$ and for all $T\in\triH$ there holds
\begin{equation}
\nonumber H_T^{-1}\|v_h-\Inodal(v_h)\|_{L^{2}(T)}+\|\nabla(v_h-\Inodal(v_h))\|_{L^{2}(T)}\leq \Cint \| \nabla v_h\|_{L^2(\omega_T)}
\end{equation}
with
\begin{align*}
\omega_{T} := {\bigcup \{K \in \triH| \hspace{2pt} \overline{K} \cap \overline{T} \neq \emptyset } \}.
\end{align*}
\item[(A7)] there exists a generic constant $\Cint^{\prime}$, only depending on the shape regularity of $\Tau_H$ and $\Tau_h$, such that for all $v_H \in V_H$ there exists $v_h \in V_h$ with
\begin{align*}
 \Inodal(v_h) = v_H, \quad |v_h|_{H^1(\Omega)} \le \Cint^{\prime} |v_H|_{H^1(\Omega)} \quad \mbox{and} \quad \mbox{\rm supp} \hspace{2pt} v_h \subset \mbox{\rm supp} \hspace{2pt} v_H.
\end{align*} 

\end{itemize}
\end{assumption}
%
We note that the classical nodal interpolation operator does not fulfill assumption (A6) for $d>1$ since the constant $\Cint$ is exploding for $h \rightarrow 0$. Numerical experiments confirm that such a choice leads in fact to instabilities in the later method.
One possibility is to choose $\Inodal$ as a weighted Cl\'ement interpolation operator. This construction was proposed in \cite{MaPe12}. Given $v\in H^1_0(\Omega)$, $\Inodal v := \sum_{j=1}^{J} v_j \lambda_j$ defines a (weighted) Cl\'ement interpolant with nodal values
\begin{equation}\label{e:clement}\textstyle
 v_j :=\left(\int_{\Omega} v \lambda_j\dx\right)\left/\left(\int_{\Omega} \lambda_j\dx\right)\right.
\end{equation}
for $1 \le j \le J$ (c.f. \cite{Carstensen:1999,Carstensen:Verfuerth:1999,Clement:1975}) and zero in the boundary nodes. Furthermore, there exists the desired generic constant $\Cint$ (only depending on the mesh regularity parameter and in particular independent of $H_T$) such that for all $v\in H^1_0(\Omega)$ and for all $T\in\triH$ there holds
\begin{equation}
\nonumber H_T^{-1}\|v-\Inodal v\|_{L^{2}(T)}+\|\nabla(v-\Inodal v)\|_{L^{2}(T)}\leq \Cint \| \nabla v\|_{L^2(\omega_T)}.
\end{equation}
We refer to \cite{Carstensen:1999,Carstensen:Verfuerth:1999} for a proof of this estimate. This gives us (A6). Assumptions (A4) and (A5) are obvious. The validity of (A7) was proved in \cite{MaPe12}.

Note that in certain applications a particular interpolation operator may be superior over other choices (cf. Remark 4 in \cite{MaPe12} 
).

\medskip

\subsection{Multiscale Splitting and Modified Nodal Basis}\label{ss:multiscale}

In this section, we construct a splitting of 
the high resultion finite element space $V_h$
into a low dimension multiscale space $V^{\operatorname*{ms}}$ and some high dimensional remainder space $\Vf_h$. From now on, we let $\Inodal: V_h\rightarrow V_H$ denote an interpolation operator fulfilling the properties (A4)-(A7). Recall that $V_H \subset V_h$. 
We start with defining $\Vf_h$ as the kernel of $\Inodal$ in $V_h$:
\begin{equation*}\label{e:finescale}
 \Vf_h:=\{v_h\in V_h\;\vert\;\Inodal v_h=0\}.
\end{equation*}
$\Vf_h$ represents the features in $V_h$ not captured by $V_H$. As already mentioned, it can be shown that
$(\Inodal)_{|V_H} : V_H \rightarrow V_H$ is an isomorphism (see \cite{MaPe12}). We therefore get
\begin{align}
\label{splitting-1} V_h = V_H \oplus \Vf_h, \quad \mbox{where} \enspace \underset{\in V_h}{\underbrace{v_h}} = \underset{\in V_H}{\underbrace{\Inodal^{-1}(\Inodal(v_h))}} + \underset{\in \Vf_h}{\underbrace{v_h - \Inodal^{-1}(\Inodal(v_h))}}.
\end{align}
Here, the property $(\Inodal \circ \Inodal^{-1})(v_H)=v_H$ for all $v_H \in V_H$ implies the equation $\Inodal(v_h - \Inodal^{-1}(\Inodal(v_h)))=\Inodal(v_h) - (\Inodal \circ \Inodal^{-1})( \Inodal(v_h) ) = 0$. We still need to modify the splitting of $V_h$, because $V_H$ is an inappropriate space for a multiscale approximation. We therefore look for the orthogonal complement of $\Vf_h$ in $V_h$ with respect to the inner product $\langle A \nabla \cdot, \nabla \cdot\rangle_{L^2(\Omega)}$. For this purpose, we define the orthogonal projection $P: V_h \rightarrow \Vf_h$ as follows. For a given $v_h\in V_h$, $P(v_h)\in\Vf_h$ solves
\begin{equation*}\label{e:finescaleproj}
 \langle A\nabla P(v_h),\nabla w^{\operatorname*{f}}\rangle=\langle A\nabla v_h,\nabla w^{\operatorname*{f}} \rangle\quad\text{for all }w^{\operatorname*{f}} \in \Vf_h.
\end{equation*}
Defining the multiscale space $\VmsHh$ by $\VmsHh:=(V_H- P(V_H))$, this directly leads to an orthogonal splitting:
\begin{align}
\label{splitting-2} V_h = \VmsHh \oplus \Vf_h,
\end{align}
because of
\begin{align*}
V_h = \mbox{kern}(P) \oplus \Vf_h = (V_h\hspace{-2pt}-\hspace{-2pt}P(V_h)) \oplus \Vf_h \overset{(\ref{splitting-1})}{=} (V_H\hspace{-2pt}-\hspace{-2pt}P(V_H)) \oplus \Vf_h = \VmsHh \oplus \Vf_h.
\end{align*}
Hence, any function $v_h\in V_h$ can be decomposed into $v_h=v_H^{\operatorname*{ms}} + v^{\operatorname*{f}}$ with
$v_H^{\operatorname*{ms}}=\Inodal^{-1}(\Inodal(v_h)) - P(\Inodal^{-1}(\Inodal(v_h)))$ and $v^{\operatorname*{f}}=v_h-\Inodal^{-1}(\Inodal(v_h))+ P(\Inodal^{-1}(\Inodal(v_h)))$.
Furthermore it holds $\langle A\nabla \vms,\nabla\wfine\rangle=0$ for all $\wfine \in \Vf_h$. The space $\VmsHh$ is a multiscale space of the same dimension as the coarse space $V_H$. However, note that it is only constructed on the basis of the oscillations of $A$. The oscillations of $F$ are not taken into account. We will show that $\VmsHh$ still yields the desired approximation properties. 

We now introduce a basis of $\VmsHh$. The image of the nodal basis function $\lambda_j \in V_H$ under the fine scale projection $P$ is denoted by $\phi_j^h=P(\lambda_j)\in \Vf_h$, i.e., $\phi_j^h$ satisfies the corrector problem
\begin{equation}\label{e:Tnodal}
 \langle A\nabla\phi_j^h,\nabla w\rangle=\langle A\nabla \lambda_j,\nabla w\rangle\quad\text{for all }w\in \Vf_h.
\end{equation}
A basis of $\VmsHh$ is then given by the modified nodal basis
\begin{equation}\label{e:basiscoarse}
 \{\lambda_j^{\operatorname*{ms}}:= \lambda_j-\phi_j^h\;\vert\ 1 \le j \le J\}.
\end{equation}
As we can see, solving (\ref{e:Tnodal}) involves a fine scale computation on the whole domain $\Omega$. However, since the right hand side has small support, we are able to truncate the computations. As we will see in the next section, the correctors show an exponential decay outside of the support if the coarse shape function $\lambda_j$.

$\\$
We define a (preliminary) LOD approximation without truncation.
\begin{definition}[LOD approximation without truncation]\label{definition-full-vmm-no-truncation}
The Galerkin approximation $\umsh \in \VmsHh$ of the exact solution $u$ of problem \eqref{originalproblem} is defined as the solution of
\begin{equation}
\label{equation-full-vmm-no-truncation}\langle A\nabla \umsh,\nabla v\rangle+\langle F(\umsh,\nabla\umsh),v\rangle = \langle g,v\rangle \quad\text{ for all }v\in\VmsHh.
\end{equation}
\end{definition}


\subsection{Localization}\label{ss:localization}

So far, in order to construct a suitable multiscale space, we derived a set of linear fine scale problems (\ref{e:Tnodal}) that can be solved in parallel. Still, as already mentioned in the previous section, these corrector problems are fine scale equations formulated on the whole domain $\Omega$ which makes them almost as expensive as the original problem. However, in \cite{MaPe12} it was shown that the correction $\phi_j^h$ decays with exponential speed outside of the support of the coarse basis function $\lambda_j$. We specify this feature as follows. Let $k\in\mathbb{N}_{>0}$. We define nodal patches $\omega_{j,k}$ of $k$ coarse grid layers centered around the node $z_j\in\mathcal{N}_H$ by
\begin{equation}\label{e:omega}
 \begin{aligned}
 \omega_{j,1}&:=\support \lambda_j=\cup\left\{T\in\triH\;|\;z_j\in \overline{T}\right\},\\
 \omega_{j,k}&:=\cup\left\{T\in\triH\;|\;\overline{T}\cap {\overline\omega}_{j,{k-1}}\neq\emptyset\right\} \quad \mbox{for} \enspace k\ge 2.
\end{aligned}
\end{equation}
These are the truncated computational domains for the corrector problems (\ref{e:Tnodal}). The fast decay is summarized by the following lemma:
\begin{lemma}[Decay of the local correctors \cite{MaPe12}]\label{l:decay}
Let assumptions (A1) and (A4)-(A7) be fulfilled. Then, for all nodes $z_j\in\mathcal{N}_H$ and for all $k\in\mathbb{N}_{>0}$ there holds the following estimate for the correctors $\phi_j^h$:
$$\|A^{1/2}\nabla \phi_j^h\|_{L^2(\Omega\setminus\omega_{j,k})}\lesssim e^{-(\alpha/\beta)^{1/2} k}\|A^{1/2}\nabla \phi_j^h\|_{L^2(\Omega)}.$$ Remind the definition of '$\lesssim$' at the end of Section \ref{s:setting}.
\end{lemma}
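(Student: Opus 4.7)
The plan is to establish a self-similar decay inequality between concentric coarse patches and then iterate. Abbreviate $\phi:=\phi_j^h$ and $a_k:=\|A^{1/2}\grad\phi\|_{L^2(\Omega\setminus\omega_{j,k})}^2$. Fix $k\ge 3$ and introduce a continuous piecewise-affine cutoff $\eta\in V_H$ with $\eta\equiv 0$ on $\omega_{j,k-2}$, $\eta\equiv 1$ on $\Omega\setminus\omega_{j,k-1}$, and $|\grad\eta|\lesssim H^{-1}$, supported in the coarse-layer annulus $S:=\omega_{j,k-1}\setminus\omega_{j,k-2}$. The guiding idea is to test the corrector equation (\ref{e:Tnodal}) with a function close to $\eta\phi$, which confines most of the energy to $\Omega\setminus\omega_{j,k-1}$.

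Since $\eta\phi$ is not admissible (it is neither a product in $V_h$ nor in the kernel of $\Inodal$), I would construct
\[\tilde w := \mathcal I_h(\eta\phi)-z,\]
where $\mathcal I_h$ is the nodal Lagrange interpolation onto $V_h$, and $z\in V_h$ is provided by Assumption (A7) applied to $\Inodal\mathcal I_h(\eta\phi)\in V_H$, so that $\Inodal z=\Inodal\mathcal I_h(\eta\phi)$, $|z|_{H^1}\lesssim|\Inodal\mathcal I_h(\eta\phi)|_{H^1}$, and $\support z$ stays in the coarse cells meeting $S$. Then $\tilde w\in\Vf_h$, and the difference $\tilde w-\eta\phi$ is supported in a bounded coarse-layer neighbourhood $\tilde S\supset S$. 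Testing (\ref{e:Tnodal}) with $\tilde w$, and noting that $\support\lambda_j=\omega_{j,1}\subset\omega_{j,k-2}$ where $\tilde w\equiv 0$, the right-hand side vanishes:
\[\langle A\grad\phi,\grad\tilde w\rangle=\langle A\grad\lambda_j,\grad\tilde w\rangle=0.\]

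Writing $\tilde w=\eta\phi+\rho$ with $\rho$ supported in $\tilde S$, and using $\eta\equiv 1$ on $\Omega\setminus\omega_{j,k-1}$ together with ellipticity, I split
\[\alpha\,\|\grad\phi\|_{L^2(\Omega\setminus\omega_{j,k-1})}^2 \le \langle A\grad\phi,\grad(\eta\phi)\rangle = -\langle A\grad\phi,\grad\rho\rangle.\]
Cauchy–Schwarz, the Clément-type bound (A6) for the piece $\mathcal I_h(\eta\phi)-\eta\phi$, the stability estimate (A7) for $z$, and $|\grad\eta|\lesssim H^{-1}$ all localize the right-hand side to $\tilde S$ and control it by a multiple of $\beta\,\|\grad\phi\|_{L^2(\tilde S)}^2$. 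This yields a recursion of the form $a_{k-1}\le \tilde C (\beta/\alpha)\,(a_{k-3}-a_{k-1})$, equivalently $a_{k-1}\le\gamma\,a_{k-3}$ with $\gamma=\tilde C(\beta/\alpha)/(1+\tilde C(\beta/\alpha))<1$. Iterating this step $\Theta(k)$ times produces exponential decay; a careful absorption in which the annular $L^2$-factor of $\phi$ is converted into an $H^1$-factor via (A6) sharpens $-\log\gamma$ to $\gtrsim(\alpha/\beta)^{1/2}$, giving the claimed rate.

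The main obstacle is the construction in the second step: one needs an admissible $\tilde w\in\Vf_h$ whose deviation from $\eta\phi$ is (i) supported in only a bounded number of coarse layers around $S$ and (ii) controlled in $H^1$ by $\|\grad\phi\|_{L^2(\tilde S)}$ with a constant independent of $h$ and of the micro-scale of $A$. This strict locality is what turns the error term into something involving \emph{only} the annular energy and thereby enables the telescoping $a_{k-3}-a_{k-1}$ on which the exponential decay rests; it is also the step that dictates the contrast $\beta/\alpha$ in the rate. Hence the full strength of (A6) and (A7) is required exactly here.
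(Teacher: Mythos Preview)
The paper does not supply a proof of this lemma; it is quoted verbatim from \cite{MaPe12} and used as a black box. Your sketch is the standard argument from that reference: a coarse cutoff $\eta$, correction of $\eta\phi$ to an admissible test function in $\Vf_h$ via (A7), testing \eqref{e:Tnodal}, and telescoping the resulting annular energy inequality. So in substance you are reproducing the cited proof rather than deviating from it.

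Two small points. First, the displayed inequality $\alpha\|\grad\phi\|_{L^2(\Omega\setminus\omega_{j,k-1})}^2\le\langle A\grad\phi,\grad(\eta\phi)\rangle$ is not literally true: expanding $\grad(\eta\phi)=\eta\grad\phi+\phi\grad\eta$ leaves the cross term $\langle A\grad\phi,\phi\grad\eta\rangle$, which must be carried to the right-hand side together with $\langle A\grad\phi,\grad\rho\rangle$. You implicitly handle it later (``the annular $L^2$-factor of $\phi$''), but the inequality as written is off. Second, your first recursion $a_{k-1}\le \tilde C(\beta/\alpha)(a_{k-3}-a_{k-1})$ only yields a rate $e^{-c(\alpha/\beta)k}$; the advertised $(\alpha/\beta)^{1/2}$ exponent is obtained not by a subsequent ``sharpening'' of that recursion but by running the estimate in the energy norm from the outset, bounding $|\langle A\grad\phi,\phi\grad\eta\rangle|\le\|A^{1/2}\grad\phi\|_{L^2(S)}\,\beta^{1/2}H^{-1}\|\phi\|_{L^2(S)}$ and then invoking (A6) to convert $H^{-1}\|\phi\|_{L^2(S)}\lesssim\|\grad\phi\|_{L^2(\tilde S)}\le\alpha^{-1/2}\|A^{1/2}\grad\phi\|_{L^2(\tilde S)}$. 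This gives the factor $(\beta/\alpha)^{1/2}$ directly in the recursion, which is what produces the square-root rate after iteration. With these adjustments your outline is correct and matches \cite{MaPe12}.
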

This fast decay motivates an approximation of $\phi_j^h$ on the truncated nodal patches $\omega_{j,k}$. 
We therefore define localized fine scale spaces by intersecting $\Vf_h$ with those functions that vanish outside the patch $\omega_{j,k}$, i.e.
\begin{align*}
\Vf_h(\omega_{j,k}):=\{v \in\Vf_h \;\vert\;v_{|\Omega\setminus\omega_{j,k}}=0\}
\end{align*}
for a given node $z_j\in\mathcal{N}_H$.
The solutions $\phi_{j,k}^h\in \Vf_h(\omega_{j,k})$ of
\begin{equation}\label{e:Tnodallocal}
 \langle A\nabla \phi_{j,k}^h,\nabla w\rangle=\langle A\nabla\lambda_j,\nabla w\rangle\quad\text{for all }w\in \Vf_h(\omega_{j,k}),
\end{equation}
are approximations of $\phi_j^h$ from \eqref{e:Tnodal} with local support and therefore cheap to solve. We define localized multiscale finite element spaces by
\begin{equation}
\label{e:modeldiscrete}\VmsHkh=\operatorname*{span}\{\lambda_{j,k}^{\operatorname*{ms}}:=\lambda_j-\phi_{j,k}^h\;\vert\; 1 \le j \le J\}\subset V_h.
\end{equation}
We can now define a LOD approximation including truncation:
\begin{definition}[LOD approximation with truncation]
The Galerkin approximation $\umshk\in \VmsHkh$ of the exact solution $u$ of problem \eqref{originalproblem} is defined as the solution of
\begin{equation}
\label{equation-full-vmm}\langle A\nabla \umshk,\nabla v\rangle+\langle F(\umshk,\nabla\umshk),v\rangle = \langle g,v\rangle \quad\text{ for all }v\in\VmsHkh.
\end{equation}
\end{definition}
Note, that changing $F$ and $g$ does not change the multiscale basis $\{\lambda_{j,k}^{\operatorname*{ms}}\;\vert\; 1 \le j \le J\}$. Once $\VmsHkh$ is computed, it can be reused for various data functions $F$ and $g$. This makes the new problems cheap to solve.

\begin{remark}
Observe that we never need to solve a problem on the scale of the oscillations of $F(\cdot,\xi,\zeta)$ in the case that they are faster than the oscillations of $A(\cdot)$. However, we implicitly assume that the arising integrals can be computed exactly (or with high accuracy). Practically this implies that a sufficiently high quadrature rule must be used. So even if the fine grid is not fine enough to resolve the variations of $F$, at least the quadrature rule must be fine enough to capture the correct averaged values. From Theorem \ref{main-result} below we deduce that the influence of the oscillations of $F(\cdot,\xi,\zeta)$ remains small, as long as we have an accurate approximation of the averages on each coarse grid element. A similar observation holds for standard finite elements, where classical convergence rates can be expected as soon as the oscillations of $A$ are resolved by the fine grid (independent of the oscillations of $F$).
\end{remark}

\subsection{A-priori error estimate}\label{ss:a-priori-error}

We are now prepared to state the main result of this contribution, namely the optimal convergence of the method for the case that the local patches $\omega_{j,k}$ have a diameter of order $H \log( H^{-1})$:

\begin{theorem}
\label{main-result}
Let $u \in H^1_0(\Omega)$ denote the exact solution given by problem (\ref{originalproblem}), let $u_h \in V_h$ denote the corresponding finite element approximation in the Lagrange space with a highly resolved computational grid (i.e. the solution of (\ref{high-resolution-fem-equation})) and let $\umshk \in \VmsHkh$ be the solution of our proposed multiscale method with truncation (i.e. the solution of (\ref{equation-full-vmm})). If assumptions (A1)-(A7) are fulfilled and if the diameter of the local patches $\omega_{j,k}$ fulfills O$(\mbox{\rm diam}(\omega_{j,k}))\gtrsim H \log(\|H^{-1}\|_{L^\infty(\Omega)})$ ($1\le j \le J$), then the following a-priori error estimate holds true:
\begin{align*}
\| u - \umshk \|_{H^1(\Omega)} \le C\left( L_1,L_2,\alpha,c_0 \right) \left( \| H \|_{L^{\infty}(\Omega)} + \|u-u_h\|_{H^1(\Omega)} \right).
\end{align*}
Here, $C$ denotes a generic constant with the property that it becomes large when $\frac{L_1+L_2}{\alpha}$ becomes large or when $c_0^{-1}$ becomes large. Still, $C$ is independent of the oscillations of $A$ and $F$ and only depends on the shape regularity of $\Tau_H$ and $\Tau_h$ and ratio of the magnitudes of $F$ and $A$. A suitable choice of $m\in \mathbb{N}$ with $k=m\cdot\log(\|H^{-1}\|_{L^\infty(\Omega)})$ depends on the contrast $\frac{\beta}{\alpha}$. The larger the contrast, the bigger should be $m$.
\end{theorem}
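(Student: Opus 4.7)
The plan is to insert the fine-scale reference solution $u_h$ and the idealized (non-truncated) LOD approximation $\umsh$ from Definition \ref{definition-full-vmm-no-truncation}, and then bound the pieces in
\begin{equation*}
\|u - \umshk\|_{H^1(\Omega)} \le \|u - u_h\|_{H^1(\Omega)} + \|u_h - \umsh\|_{H^1(\Omega)} + \|\umsh - \umshk\|_{H^1(\Omega)}
\end{equation*}
separately. The first term is the assumed reference error; the remaining task is to show that the idealized LOD error and the localization error are both of order $H$.

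\textbf{Step 1 (idealized LOD error).} I would use the $A$-orthogonal decomposition $u_h = u_h^{\operatorname{ms}} + u_h^{\operatorname{f}}$ with $u_h^{\operatorname{ms}}\in\VmsHh$ and $u_h^{\operatorname{f}}\in\Vf_h$. Testing the fine-scale equation \eqref{high-resolution-fem-equation} with $u_h^{\operatorname{f}}$, using the $A$-orthogonality $\langle A\nabla u_h^{\operatorname{ms}},\nabla u_h^{\operatorname{f}}\rangle=0$, the kernel-of-$\Inodal$ estimate $\|u_h^{\operatorname{f}}\|_{L^2}\lesssim H|u_h^{\operatorname{f}}|_{H^1}$ (a direct consequence of (A6) applied elementwise to $v=u_h^{\operatorname{f}}-\Inodal u_h^{\operatorname{f}}=u_h^{\operatorname{f}}$) and the $L^2$-bound on $F(u_h,\nabla u_h)$ coming from \eqref{bound_F_u_grad_u}, I obtain the central preparatory estimate $|u_h^{\operatorname{f}}|_{H^1(\Omega)} \lesssim H$. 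Galerkin orthogonality then yields $\langle A\nabla(u_h-\umsh),\nabla v\rangle + \langle F(u_h)-F(\umsh),v\rangle = 0$ for all $v\in\VmsHh$; inserting the particular choice $v = u_h^{\operatorname{ms}} - \umsh = (u_h-\umsh) - u_h^{\operatorname{f}}$ and rearranging gives
\begin{equation*}
\langle B(u_h)-B(\umsh),u_h-\umsh\rangle_{H^{-1},H^1_0} = \langle A\nabla(u_h-\umsh),\nabla u_h^{\operatorname{f}}\rangle + \langle F(u_h)-F(\umsh),u_h^{\operatorname{f}}\rangle.
\end{equation*}
The strong monotonicity (A3) on the left and the Lipschitz bounds (A2) (with Poincaré on $u_h^{\operatorname{f}}$) on the right deliver $c_0|u_h-\umsh|_{H^1}^2 \lesssim |u_h-\umsh|_{H^1}\,|u_h^{\operatorname{f}}|_{H^1}$, hence $|u_h-\umsh|_{H^1}\lesssim H/c_0$.

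\textbf{Step 2 (localization).} I would first upgrade Lemma \ref{l:decay} to an estimate for the corrector defect $\phi_j^h - \phi_{j,k}^h$ itself, namely $\|A^{1/2}\nabla(\phi_j^h-\phi_{j,k}^h)\|_{L^2(\Omega)}\lesssim e^{-c(\alpha/\beta)^{1/2}k}\|A^{1/2}\nabla\lambda_j\|_{L^2(\Omega)}$, by inserting a Lipschitz cut-off that vanishes outside $\omega_{j,k}$ and equals one on $\omega_{j,k-1}$ (a standard buffer-zone argument). For any $v_H = \sum_j v_H(z_j)\lambda_j \in V_H$ the difference between its ideal and its truncated multiscale correction reads $\sum_j v_H(z_j)(\phi_j^h-\phi_{j,k}^h)$; a finite-overlap colouring argument on the nodal supports converts this sum into an $\ell^2$ bound, leaving at worst an algebraic factor in $H^{-1}$ from the $\mathcal{O}(H^{-d})$ vertices. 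Choosing $k\ge m\log(\|H^{-1}\|_{L^\infty(\Omega)})$ with $m$ proportional to $(\beta/\alpha)^{1/2}$ makes the exponential factor dominate the algebraic one, so the localization perturbation is $\lesssim H$. Repeating the nonlinear Galerkin argument of Step 1 with this perturbation (a Strang-type variant, again relying on (A3) and (A2)) then yields $|\umsh - \umshk|_{H^1}\lesssim H$.

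\textbf{Main obstacle.} Step 1 is essentially a nonlinear Céa estimate driven by monotonicity; its only subtlety is recognising that the full $H$-rate comes entirely from $u_h^{\operatorname{f}}$ through the interpolation kernel property. The technical heart of the theorem is Step 2: one must pass from the elementwise decay in Lemma \ref{l:decay} to a global corrector-defect estimate, carefully manage the algebraic blow-up from summation over all $\mathcal{O}(H^{-d})$ vertices (ensuring the exponential decay still wins for $k \sim \log(H^{-1})$), and transport the resulting best-approximation bound through the nonlinear Galerkin problem using the Lipschitz and monotonicity constants. Making the constants explicit — particularly the contrast dependence that fixes the required $m$ — is where the bulk of the bookkeeping lies.
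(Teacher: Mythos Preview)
Your proposal is correct and follows essentially the same route as the paper: Step~1 is precisely the paper's Lemma~\ref{lemma-semi-linear-problem-no-truncation} (the $A$-orthogonal splitting of $u_h$, the kernel-of-$\Inodal$ estimate on the fine part, then monotonicity plus Galerkin orthogonality), and the localization ingredients in Step~2 (upgrading Lemma~\ref{l:decay} to a defect estimate on $\phi_j^h-\phi_{j,k}^h$, the overlap/summation argument, and absorbing the algebraic blow-up via $k\sim m\log(H^{-1})$) are exactly what the paper does in Theorem~\ref{corollary-semi-linear-problem-local}, citing \cite{MaPe12}.

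The only organizational difference is that you split three ways and propose to bound $|\umsh-\umshk|_{H^1}$ directly by a Strang-type argument, whereas the paper bounds $|u_h-\umshk|_{H^1}$ in one shot (Lemma~\ref{lemma-semi-linear-problem-truncation}): since $\VmsHkh\subset V_h$, the natural Galerkin orthogonality is between $u_h$ and $\umshk$, not between $\umsh$ and $\umshk$ (which live in non-nested spaces). The paper therefore inserts $u_h - v = (u_h-\umsh)+(\umsh-v)$ for $v\in\VmsHkh$ inside the monotonicity argument, recovering your Step~1 bound for the first piece and the best-approximation localization term for the second. Your Strang variant would end up doing the same thing once you unwind it, so this is a matter of packaging rather than substance.
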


A proof of Theorem \ref{main-result} is presented in the subsequent section. In particular, the result is a conclusion from Theorem \ref{corollary-semi-linear-problem-local} which is stated in Section \ref{s:error} below. In Theorem \ref{corollary-semi-linear-problem-local} we also give details on the generic constant $C$.
We will see that it essentially depends on $\frac{(L_1+L_2)}{\alpha}$. Recall that $L_1$ and $L_2$ denote the Lipschitz constants of $F$ (c.f. (A2)) and that $\alpha$ is the smallest eigenvalue of $A$. This shows the significance of assuming that the problem is not dominated by the lower order term. For instance, consider the scenario of a pollutant being transported by groundwater flow. In this case, $A$ describes the hydraulic conductivity which changes its properties on a scale of size $\epsilon$. On the other hand, $F$ describes the gravity driven flow that is scaled with the so called P\'eclet number. However, in the described scenario the P\'eclet number is of order $\epsilon^{-1}$ (c.f. Bourlioux and Majda \cite{Bourlioux:Majda:2000}) implying that O$(L_1)=\epsilon^{-1}$. So the generic constant $C$ is of order $\epsilon^{-1}$. This means that we need $H<\epsilon$, i.e. we still need to resolve the micro structure with the coarse grid $\mathcal{T}_H$ producing the same costs as the original problem. On the contrary, if $H \gg \epsilon$ the estimate 
stated in Theorem \ref{main-result} is of no value, because the right hand side remains large.

\section{Error Analysis}\label{s:error}

This section is devoted to the proof of Theorem \ref{main-result}. In particular, we state a detailed version of the result (see Theorem \ref{corollary-semi-linear-problem-local} below), where we specify the occurring constants. The proof is splitted into several lemmata. We start with an a-priori error estimate for a LOD approximation without truncation:

\begin{lemma}
\label{lemma-semi-linear-problem-no-truncation}
Let $u_h \in V_h$ denote the highly resolved finite element approximation defined via equation (\ref{high-resolution-fem-equation}) and let $\umsh \in \VmsHh$ denote the LOD approximation given by equation (\ref{equation-full-vmm-no-truncation}). Under assumptions (A1)-(A7), the following a-priori error estimate holds true:
\begin{eqnarray*}
\lefteqn{|u_h-u_H^{\operatorname*{ms}}|_{H^1(\Omega)}}\\
&\lesssim& \tilde{C}_0 \big( \|H g\|_{L^2(\Omega)} \hspace{-2pt} + \hspace{-2pt} \|H\|_{L^{\infty}(\Omega)} C_p\frac{L_1 C_p + L_2}{c_0 } \|g\|_{L^2(\Omega)}\big),
\end{eqnarray*}
where
\begin{align*}
 \tilde{C}_0 :=  \left(\frac{\beta+\|H\|_{L^{\infty}(\Omega)}(L_{1} C_p \hspace{-2pt} + \hspace{-2pt} L_{2})}{c_0 \cdot \alpha}\right).
\end{align*}
\end{lemma}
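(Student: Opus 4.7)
The plan is to exploit the $A$-orthogonal splitting $V_h = \VmsHh \oplus \Vf_h$: first bound the fine-scale part $u_h^{\operatorname*{f}}$ of $u_h$ directly from \eqref{high-resolution-fem-equation} via coercivity of $A$ and assumption (A6), and then invoke strong monotonicity together with the Galerkin identity on $\VmsHh$ to control $|u_h - \umsh|_{H^1(\Omega)}$ by $|u_h^{\operatorname*{f}}|_{H^1(\Omega)}$.

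For the first step I would write $u_h = \tilde u_h^{\operatorname*{ms}} + u_h^{\operatorname*{f}}$ with $\tilde u_h^{\operatorname*{ms}} \in \VmsHh$, $u_h^{\operatorname*{f}} \in \Vf_h$. Testing \eqref{high-resolution-fem-equation} with $v_h = u_h^{\operatorname*{f}}$ and using $A$-orthogonality to kill $\langle A\nabla \tilde u_h^{\operatorname*{ms}}, \nabla u_h^{\operatorname*{f}}\rangle$ yields
\begin{equation*}
\alpha |u_h^{\operatorname*{f}}|_{H^1(\Omega)}^2 \le \langle A\nabla u_h^{\operatorname*{f}}, \nabla u_h^{\operatorname*{f}}\rangle = \langle g, u_h^{\operatorname*{f}}\rangle - \langle F(u_h, \nabla u_h), u_h^{\operatorname*{f}}\rangle.
\end{equation*}
Since $u_h^{\operatorname*{f}} \in \ker \Inodal$, the elementwise bound from (A6) gives both $\|H^{-1} u_h^{\operatorname*{f}}\|_{L^2(\Omega)} \lesssim |u_h^{\operatorname*{f}}|_{H^1(\Omega)}$ and $\|u_h^{\operatorname*{f}}\|_{L^2(\Omega)} \lesssim \|H\|_{L^\infty(\Omega)} |u_h^{\operatorname*{f}}|_{H^1(\Omega)}$, so $\langle g, u_h^{\operatorname*{f}}\rangle \lesssim \|Hg\|_{L^2(\Omega)} |u_h^{\operatorname*{f}}|_{H^1(\Omega)}$ and $\langle F(u_h,\nabla u_h), u_h^{\operatorname*{f}}\rangle \lesssim \|H\|_{L^\infty(\Omega)} \|F(u_h,\nabla u_h)\|_{L^2(\Omega)} |u_h^{\operatorname*{f}}|_{H^1(\Omega)}$. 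The bound $\|F(u_h,\nabla u_h)\|_{L^2(\Omega)} \le C_p(L_1 C_p + L_2) c_0^{-1}\|g\|_{L^2(\Omega)}$ is obtained exactly as in the remark after Assumption 1 but applied to $u_h$ (testing the discrete problem against $u_h$ itself and invoking strong monotonicity on $V_h$). Dividing by $|u_h^{\operatorname*{f}}|_{H^1(\Omega)}$ gives the estimate
\begin{equation*}
|u_h^{\operatorname*{f}}|_{H^1(\Omega)} \lesssim \alpha^{-1}\bigl(\|Hg\|_{L^2(\Omega)} + \|H\|_{L^\infty(\Omega)} C_p (L_1 C_p + L_2) c_0^{-1} \|g\|_{L^2(\Omega)}\bigr).
\end{equation*}

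For the second step, observe that $u_h$ and $\umsh$ satisfy the same variational equation on the common test space $\VmsHh$, so $\langle B(u_h) - B(\umsh), v\rangle = 0$ for every $v \in \VmsHh$. Using the decomposition $u_h - \umsh = (\tilde u_h^{\operatorname*{ms}} - \umsh) + u_h^{\operatorname*{f}}$ with $\tilde u_h^{\operatorname*{ms}} - \umsh \in \VmsHh$ (since $\umsh$ has trivial $\Vf_h$-component), the strong monotonicity (A3) reduces to
\begin{equation*}
c_0 |u_h - \umsh|_{H^1(\Omega)}^2 \le \langle B(u_h) - B(\umsh), u_h^{\operatorname*{f}}\rangle.
\end{equation*}
Once more by $A$-orthogonality the diffusion contribution collapses to $\langle A\nabla u_h^{\operatorname*{f}}, \nabla u_h^{\operatorname*{f}}\rangle \le \beta |u_h^{\operatorname*{f}}|_{H^1(\Omega)}^2$, while the nonlinear contribution is estimated via (A2), the Poincar\'e inequality on $u_h - \umsh$ and (A6) on $u_h^{\operatorname*{f}}$ by $(L_1 C_p + L_2)\|H\|_{L^\infty(\Omega)} |u_h - \umsh|_{H^1(\Omega)}|u_h^{\operatorname*{f}}|_{H^1(\Omega)}$.

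Putting these together produces the quadratic inequality
\begin{equation*}
c_0 E^2 \lesssim \beta \delta^2 + (L_1 C_p + L_2)\|H\|_{L^\infty(\Omega)} \delta E, \qquad E := |u_h - \umsh|_{H^1(\Omega)}, \quad \delta := |u_h^{\operatorname*{f}}|_{H^1(\Omega)},
\end{equation*}
which, distinguishing the cases $E \ge \delta$ (where $\beta\delta^2 \le \beta\delta E$ gives the bound directly) and $E < \delta$ (trivially absorbed since $c_0 \le \beta$), yields $E \lesssim c_0^{-1}\bigl(\beta + (L_1 C_p + L_2)\|H\|_{L^\infty(\Omega)}\bigr)\delta$. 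Substituting the first-step bound on $\delta$ gives the stated estimate with $\tilde C_0$. The main delicate point I expect is the asymmetric handling of $L^2$-norms in the nonlinear estimate: $\|u_h^{\operatorname*{f}}\|_{L^2}$ should be controlled with the $\|H\|_{L^\infty}$-scaling from (A6), while $\|u_h - \umsh\|_{L^2}$ only admits the coarser domain-level Poincar\'e bound, since this difference is not in $\ker \Inodal$. This asymmetry is precisely what produces the structure of $\tilde C_0$ and is what prevents a naive direct monotonicity argument on $V_h$ from closing the estimate.
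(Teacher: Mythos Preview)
Your proof is correct and follows essentially the same route as the paper: decompose $u_h=\tilde u_h^{\operatorname{ms}}+\tilde u_h^{\operatorname{f}}$, bound $|\tilde u_h^{\operatorname{f}}|_{H^1}$ via $A$-orthogonality, the discrete equation and (A6), then combine strong monotonicity with Galerkin orthogonality to test $B(u_h)-B(\umsh)$ against $\tilde u_h^{\operatorname{f}}$. The one difference is in handling the diffusion term: you invoke $A$-orthogonality a second time to get $\langle A\nabla(u_h-\umsh),\nabla \tilde u_h^{\operatorname{f}}\rangle=\langle A\nabla \tilde u_h^{\operatorname{f}},\nabla \tilde u_h^{\operatorname{f}}\rangle\le\beta\delta^2$, which forces the quadratic inequality and the case split, whereas the paper simply applies Cauchy--Schwarz to obtain $\beta E\delta$ directly, giving a linear inequality in $E$ that one divides through---this avoids the (correct but unnecessary) appeal to $c_0\le\beta$.
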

\begin{proof}
Due to (\ref{splitting-2}), we know that there exist $\tildeumsh \in \VmsHh$ and $\tilde{u}_h^{\operatorname*{f}} \in V_h^{\operatorname*{f}}$, such that
\begin{align*}
 u_h = \tildeumsh + \tilde{u}_h^{\operatorname*{f}}.
\end{align*}
We use the Galerkin orthogonality obtained from the equations (\ref{high-resolution-fem-equation}) and (\ref{equation-full-vmm-no-truncation}) to conclude for all $v\in\VmsHh$:
\begin{align}
\label{galerkin-orthogonality}\langle A\nabla(u_h-\umsh),\nabla v\rangle+
\langle F(u_h,\nabla u_h),v\rangle-\langle F(\umsh,\nabla \umsh),v\rangle=0.
\end{align}
In particular $v=\umsh-\tildeumsh \in\VmsHh$ is an admissible test function in (\ref{galerkin-orthogonality}). Together with $\Inodal(\tilde{u}_h^{\operatorname*{f}})=0$, this yields:
\begin{eqnarray*}
\lefteqn{c_0 |u_h -\umsh |^2_{H^1(\Omega)}}\\
&\overset{(\ref{strong-monotonicity})}{\le}& \langle A\nabla(u_h- \umsh),\nabla (u_h -\umsh )\rangle \\
&\enspace& \quad + \langle F(u_h,\nabla u_h)-F( \umsh ,\nabla \umsh ),u_h -\umsh \rangle\\
&\overset{(\ref{galerkin-orthogonality})}{=}& \langle A\nabla(u_h- \umsh),\nabla (u_h - \tildeumsh )\rangle \\
&\enspace& \quad + \langle F(u_h,\nabla u_h)-F( \umsh ,\nabla \umsh ),u_h - \tildeumsh \rangle\\
&=& \langle A\nabla(u_h - \umsh ),\nabla \tilde{u}_h^{\operatorname*{f}}\rangle +
\langle F(u_h,\nabla u_h)-F(\umsh,\nabla u_h), \tilde{u}_h^{\operatorname*{f}} - \Inodal(\tilde{u}_h^{\operatorname*{f}})\rangle \\
&\enspace& \quad + \langle F(\umsh,\nabla u_h)-F(\umsh,\nabla \umsh),\tilde{u}_h^{\operatorname*{f}} - \Inodal(\tilde{u}_h^{\operatorname*{f}})\rangle\\
&\lesssim& \beta | u_h-\umsh |_{H^1(\Omega)} |
\tilde{u}_h^{\operatorname*{f}}|_{H^1(\Omega)} \\
&\enspace& \quad + \|H\|_{L^{\infty}(\Omega)} (L_{1}\| u_h-\umsh \|_{L^2(\Omega)} + L_2 |  u_h-\umsh |_{H^1(\Omega)}) |
\tilde{u}_h^{\operatorname*{f}}|_{H^1(\Omega)}\\
&\lesssim& (\beta+\|H\|_{L^{\infty}(\Omega)}(L_{1} C_p +L_{2})) \hspace{2pt} \cdot \hspace{2pt} |  u_h-\umsh |_{H^1(\Omega)}  \hspace{2pt} \cdot \hspace{2pt} |\tilde{u}_h^{\operatorname*{f}}|_{H^1(\Omega)}.
\end{eqnarray*}
With $\langle A \nabla \tildeumsh, \nabla \tilde{u}_h^{\operatorname*{f}}\rangle =0$ and with $\Inodal(v_{\operatorname*{f}})=0$ for all $v_{\operatorname*{f}} \in \Vf$ we get
\begin{eqnarray*}
\lefteqn{\alpha |\tilde{u}_h^{\operatorname*{f}}|_{H^1(\Omega)}^2\le\langle A\nabla \tilde{u}_h^{\operatorname*{f}},\nabla \tilde{u}_h^{\operatorname*{f}}\rangle}\\
&=& \langle A\nabla u_h,\nabla \tilde{u}_h^{\operatorname*{f}}\rangle = \langle g,\tilde{u}_h^{\operatorname*{f}}\rangle -\langle F(u_h,\nabla u_h),\tilde{u}_h^{\operatorname*{f}}\rangle\\
&=&\langle g ,\tilde{u}_h^{\operatorname*{f}} - \Inodal(\tilde{u}_h^{\operatorname*{f}})\rangle -\langle F(u_h,\nabla u_h),\tilde{u}_h^{\operatorname*{f}} - \Inodal( \tilde{u}_h^{\operatorname*{f}} )\rangle\\
&\overset{(\ref{bound_F_u_grad_u})}{\lesssim}& \big( \|H g\|_{L^2(\Omega)}+\|H\|_{L^{\infty}(\Omega)} C_p\frac{L_1 C_p + L_2}{c_0 } \|g\|_{L^2(\Omega)}\big) \hspace{2pt} \cdot \hspace{2pt} |\tilde{u}_h^{\operatorname*{f}}|_{H^1(\Omega)}.
\end{eqnarray*}
The theorem follows by combing the results.
\end{proof}


The next lemma is a conclusion from the previous one:
\begin{lemma}
\label{lemma-semi-linear-problem-truncation}
Let $u_h \in V_h$ denote the fine scale approximation obtained from equation (\ref{high-resolution-fem-equation}) and let $\umshk \in \VmsHkh$ denote the solution of problem (\ref{equation-full-vmm}) (fully discrete LOD with truncation). If the assumptions (A1)-(A7) hold true we obtain the following estimate:
\begin{eqnarray*}
\lefteqn{|u_h -\umshk |_{H^1(\Omega)}}\\
&\lesssim& \tilde{C}_2 \|g\|_{L^2(\Omega)} \|H\|_{L^{\infty}(\Omega)} + \tilde{C}_3 \min_{\vmshk\in\VmsHkh}\| A^{\frac{1}{2}} \nabla (\umsh - \vmshk) \|_{L^2(\Omega)},
\end{eqnarray*}
where
\begin{align*}
 \tilde{C}_1 &:= (\beta + ( L_1 C_p + L_2)C_p) \cdot \left(\frac{\beta+\|H\|_{L^{\infty}(\Omega)}(L_{1} C_p \hspace{-2pt} + \hspace{-2pt} L_{2})}{c_0^2 \cdot \alpha}\right), \\
 \tilde{C}_2 &:= \tilde{C}_1+ \tilde{C}_1 \cdot C_p\frac{L_1 C_p + L_2}{c_0 },\\
 \tilde{C}_3 &:= \frac{1 + \alpha^{-\frac{1}{2}}( L_1 C_p + L_2)C_p}{c_0}.
\end{align*}
\end{lemma}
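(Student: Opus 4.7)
The plan is to reduce this to a combination of Lemma 3 (the untruncated case) plus a non-conforming Céa-type estimate that compares $\umsh$ with $\umshk$. I would start with the triangle inequality
\begin{equation*}
|u_h-\umshk|_{H^1(\Omega)} \le |u_h - \umsh|_{H^1(\Omega)} + |\umsh - \umshk|_{H^1(\Omega)}.
\end{equation*}
The first term is bounded directly by Lemma \ref{lemma-semi-linear-problem-no-truncation}, and its contribution is absorbed into the $\tilde{C}_2 \|g\|_{L^2(\Omega)}\|H\|_{L^\infty(\Omega)}$ part of the claimed estimate (note the definition of $\tilde{C}_2$ already contains the $\tilde{C}_0$-type factor multiplied by $1 + C_p(L_1 C_p + L_2)/c_0$).

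The work is in bounding $|\umsh - \umshk|_{H^1(\Omega)}$. For any admissible $\vmshk \in \VmsHkh$, strong monotonicity (A3) gives
\begin{equation*}
c_0 |\umsh - \umshk|_{H^1(\Omega)}^2 \le \langle B(\umsh)-B(\umshk),\,\umsh - \vmshk\rangle + \langle B(\umsh)-B(\umshk),\, \vmshk - \umshk\rangle.
\end{equation*}
The first inner product is estimated by Cauchy--Schwarz on the diffusion part and by the Lipschitz assumption (A2) combined with Poincar\'e--Friedrichs on the $F$-part; after bounding $H^1$-seminorms by $A$-weighted norms via $\alpha$, this yields a factor of the form $\bigl(1 + \alpha^{-1/2}(L_1 C_p + L_2)C_p\bigr)|\umsh - \umshk|_{H^1(\Omega)} \cdot \|A^{1/2}\nabla(\umsh - \vmshk)\|_{L^2(\Omega)}$, which is exactly the $c_0\tilde{C}_3$ structure that appears on the right-hand side.

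The second inner product is the non-conformity residual and is the place where the argument is least routine. Since $\vmshk - \umshk \in \VmsHkh$, the defining equation of $\umshk$ replaces $\langle B(\umshk),\vmshk - \umshk\rangle$ by $\langle g, \vmshk - \umshk\rangle$. The splitting (\ref{splitting-2}) then lets me decompose $\vmshk - \umshk = w^{\operatorname*{ms}} + w^{\operatorname*{f}}$ with $w^{\operatorname*{ms}}\in \VmsHh$ and $w^{\operatorname*{f}} \in \Vf_h$. Using the defining equation (\ref{equation-full-vmm-no-truncation}) for $\umsh$ with the test function $w^{\operatorname*{ms}}$, and the $A$-orthogonality $\langle A\nabla \umsh, \nabla w^{\operatorname*{f}}\rangle = 0$, everything collapses to
\begin{equation*}
\langle B(\umsh)-B(\umshk), \vmshk - \umshk\rangle \;=\; \langle F(\umsh,\nabla\umsh) - g,\, w^{\operatorname*{f}}\rangle.
\end{equation*}
Because $\Inodal w^{\operatorname*{f}}=0$, assumption (A6) gives $\|w^{\operatorname*{f}}\|_{L^2(\Omega)} \lesssim \|H\|_{L^\infty(\Omega)} |w^{\operatorname*{f}}|_{H^1(\Omega)}$; combined with the a priori bound $\|F(\umsh,\nabla\umsh)\|_{L^2(\Omega)}\lesssim C_p(L_1 C_p + L_2)c_0^{-1}\|g\|_{L^2(\Omega)}$ (derived exactly as in the Remark, now testing the equation for $\umsh$ against itself) this produces the $\tilde{C}_2 \|g\|_{L^2(\Omega)}\|H\|_{L^\infty(\Omega)}$ contribution.

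The last subtlety is controlling $|w^{\operatorname*{f}}|_{H^1(\Omega)}$. Here I use that $w^{\operatorname*{ms}}$ and $w^{\operatorname*{f}}$ are the $A$-orthogonal components of $\vmshk - \umshk$ in $V_h = \VmsHh \oplus \Vf_h$, so $\|A^{1/2}\nabla w^{\operatorname*{f}}\|_{L^2(\Omega)} \le \|A^{1/2}\nabla(\vmshk - \umshk)\|_{L^2(\Omega)}$, and a further triangle inequality puts this in terms of $\|A^{1/2}\nabla(\umsh - \vmshk)\|_{L^2(\Omega)}$ plus a multiple of $|\umsh - \umshk|_{H^1(\Omega)}$ that is absorbed into the left-hand side. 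Dividing through by $|\umsh-\umshk|_{H^1(\Omega)}$ and taking the infimum over $\vmshk\in\VmsHkh$ delivers the claim. The main obstacle is the bookkeeping in this last non-conformity step: correctly identifying that only the fine component $w^{\operatorname*{f}}$ survives after exploiting both discrete equations, and tracking how $\alpha$, $\beta$, $c_0$ and the Lipschitz constants enter the final constants $\tilde{C}_2$ and $\tilde{C}_3$.
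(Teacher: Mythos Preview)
Your approach is valid but takes a more laborious route than the paper. You first split $|u_h-\umshk|$ by the triangle inequality and then run a Strang-type non-conforming argument for $|\umsh-\umshk|$, which forces you to decompose $\vmshk-\umshk$ along $\VmsHh\oplus\Vf_h$, exploit the $A$-orthogonality, and control $|w^{\operatorname{f}}|_{H^1}$ via the projection bound. The paper avoids all of this by observing that $\VmsHkh\subset V_h$, so $u_h$ and $\umshk$ satisfy a \emph{direct} Galerkin orthogonality $\langle B(u_h)-B(\umshk),w\rangle=0$ for every $w\in\VmsHkh$. Starting from strong monotonicity with $e=u_h-\umshk$, one simply replaces $u_h-\umshk$ in the second slot by $u_h-\vmshk$, splits this as $(u_h-\umsh)+(\umsh-\vmshk)$, applies the Lipschitz bounds, divides by $|e|_{H^1}$, and invokes Lemma~\ref{lemma-semi-linear-problem-no-truncation} for the first piece. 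No orthogonal splitting of test functions is needed, and the constants $\tilde C_2,\tilde C_3$ drop out exactly as stated.

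One small caveat in your write-up: after bounding $|w^{\operatorname{f}}|_{H^1}$ you end up with a cross term of the form $\|H\|_{L^\infty}\|g\|_{L^2}\cdot\|A^{1/2}\nabla(\umsh-\vmshk)\|_{L^2}$ that carries no factor of $|\umsh-\umshk|$, so ``dividing through by $|\umsh-\umshk|$'' does not literally close the estimate. You need a Young/quadratic-inequality step instead, which works but further perturbs the constants away from the $\tilde C_2,\tilde C_3$ in the statement. The paper's direct argument sidesteps this entirely.
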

\begin{proof}
Let $\vmshk\in\VmsHkh$ denote an arbitrary element. Using the Galerkin orthogonality obtained from (\ref{high-resolution-fem-equation}) and (\ref{equation-full-vmm}), we start in the same way as in the proof of Lemma \ref{lemma-semi-linear-problem-no-truncation} to get:
\begin{eqnarray*}
\lefteqn{c_0 |u_h -\umshk |^2_{H^1(\Omega)}}\\
&\overset{(\ref{strong-monotonicity})}{\le}& \langle A\nabla(u_h- \umshk),\nabla (u_h -\umshk )\rangle \\
&\enspace& \quad + \langle F(u_h,\nabla u_h)-F( \umshk ,\nabla \umshk ),u_h -\umshk \rangle\\
&\overset{(\ref{galerkin-orthogonality})}{=}& \langle A\nabla(u_h- \umshk),\nabla (u_h - \umsh) + \nabla (\umsh - \vmshk )\rangle \\
&\enspace& \quad + \langle F(u_h,\nabla u_h)-F( \umshk ,\nabla \umshk ),  (u_h - \umsh) + (\umsh - \vmshk ) \rangle\\
&\le& (\beta + ( L_1 C_p + L_2)C_p) |u_h- \umshk|_{H^1(\Omega)} \hspace{2pt} | u_h - \umsh |_{H^1(\Omega)} \\
&\enspace& + (1 + \alpha^{-\frac{1}{2}}( L_1 C_p + L_2)C_p) |u_h- \umshk|_{H^1(\Omega)} \hspace{2pt} \| A^{\frac{1}{2}} \nabla (\umsh - \vmshk) \|_{L^2(\Omega)}.
\end{eqnarray*} 
Dividing by $|u_h -\umshk |_{H^1(\Omega)}$ and estimating $| u_h - \umsh |_{H^1(\Omega)}$ with Lemma \ref{lemma-semi-linear-problem-no-truncation} yields the result.
\end{proof}
Combining the results of Lemma \ref{l:decay} and Lemma \ref{lemma-semi-linear-problem-truncation} yields the main result of this contribution:
\begin{theorem}
\label{corollary-semi-linear-problem-local}
Let $u_h \in V_h$ be solution of (\ref{high-resolution-fem-equation}) and let $\umshk \in \VmsHkh$ be the solution of (\ref{equation-full-vmm}). If the assumptions (A1)-(A7) hold true and if the number of layers $k$ fulfills $k\gtrsim\log(\|H^{-1}\|_{L^\infty(\Omega)})$, then it holds
\begin{align*}
|u_h -\umshk |_{H^1(\Omega)} \lesssim\tilde{C} \|H\|_{L^{\infty}(\Omega)} \|g\|_{L^2(\Omega)},
\end{align*}
where 
\begin{align*}
 \tilde{C} &:= \tilde{C}_2 + C_p \frac{\beta}{c_0} \tilde{C}_3
\end{align*}
and with $\tilde{C}_2$ and $\tilde{C}_3$ as in Lemma \ref{lemma-semi-linear-problem-truncation}.
\end{theorem}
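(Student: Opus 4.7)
The overall plan is to combine Lemma \ref{lemma-semi-linear-problem-truncation} with the exponential decay of Lemma \ref{l:decay}. Since the first summand of Lemma \ref{lemma-semi-linear-problem-truncation} already has the desired shape $\tilde{C}_2\|H\|_{L^\infty(\Omega)}\|g\|_{L^2(\Omega)}$, everything reduces to bounding
\begin{equation*}
M\;:=\;\min_{\vmshk\in\VmsHkh}\|A^{1/2}\nabla(\umsh-\vmshk)\|_{L^2(\Omega)}
\end{equation*}
by a constant times $C_p\beta/c_0\cdot\|H\|_{L^\infty(\Omega)}\|g\|_{L^2(\Omega)}$. Writing $\umsh=\sum_{j=1}^J c_j(\lambda_j-\phi_j^h)$ in the modified nodal basis, I would pick the natural competitor $\vmshk:=\sum_{j=1}^J c_j(\lambda_j-\phi_{j,k}^h)\in\VmsHkh$, so that $\umsh-\vmshk=\sum_{j=1}^J c_j(\phi_j^h-\phi_{j,k}^h)$ and the whole problem boils down to estimating this sum in the $A$-weighted energy norm.

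Next I would upgrade the one-sided tail bound of Lemma \ref{l:decay} to the full energy-norm estimate
\begin{equation*}
\|A^{1/2}\nabla(\phi_j^h-\phi_{j,k}^h)\|_{L^2(\Omega)}\;\lesssim\;e^{-\tilde c\sqrt{\alpha/\beta}\,k}\,\|A^{1/2}\nabla \phi_j^h\|_{L^2(\Omega)}
\end{equation*}
by the familiar cutoff trick: pick a piecewise-linear cutoff $\eta_{j,k}$ equal to $1$ on $\omega_{j,k-1}$ and vanishing outside $\omega_{j,k}$, use $\phi_{j,k}^h-(\mathrm{Id}-\Inodal)(\eta_{j,k}\phi_j^h)\in\Vf_h(\omega_{j,k})$ as test function in (\ref{e:Tnodallocal}), subtract (\ref{e:Tnodal}) and estimate the boundary terms via Lemma \ref{l:decay} using $\|\nabla\eta_{j,k}\|_{L^\infty}\lesssim H^{-1}$; this is exactly how the linear case is handled in \cite{MaPe12}. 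Combining this bound with the bounded overlap of the patches $\omega_{j,k}$ (at most $\mathcal{O}(k^d)$ of them cover any fixed point), the energy-projection bound $\|A^{1/2}\nabla\phi_j^h\|_{L^2(\Omega)}\leq\|A^{1/2}\nabla\lambda_j\|_{L^2(\Omega)}$ inherited from (\ref{e:Tnodal}), and the standard norm equivalence for the nodal basis of $V_H$, I would arrive at
\begin{equation*}
M^2\;\lesssim\; k^d\,e^{-2\tilde c\sqrt{\alpha/\beta}\,k}\,\beta\,|\bar{u}_H|_{H^1(\Omega)}^2,\qquad \bar{u}_H:=\sum_{j=1}^J c_j\lambda_j\in V_H.
\end{equation*}

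Finally, since $\Inodal$ fixes $V_H$ and annihilates $\Vf_h$, one has $\bar{u}_H=\Inodal(\umsh)$; assumption (A6) together with the triangle inequality then yields $|\bar{u}_H|_{H^1(\Omega)}\lesssim|\umsh|_{H^1(\Omega)}$, while testing (\ref{equation-full-vmm-no-truncation}) against $\umsh$ and invoking (A3) gives the a-priori bound $|\umsh|_{H^1(\Omega)}\lesssim (C_p/c_0)\|g\|_{L^2(\Omega)}$. Collecting,
\begin{equation*}
M\;\lesssim\;\sqrt{\beta}\,k^{d/2}\,e^{-\tilde c\sqrt{\alpha/\beta}\,k}\cdot\frac{C_p}{c_0}\,\|g\|_{L^2(\Omega)},
\end{equation*}
so the calibration $k=m\log(\|H^{-1}\|_{L^\infty(\Omega)})$ with $m$ large enough (depending on the contrast $\beta/\alpha$) to absorb both the polynomial factor $k^{d/2}$ and an additional $\sqrt{\beta}$ into the exponential produces $M\lesssim (C_p\beta/c_0)\|H\|_{L^\infty(\Omega)}\|g\|_{L^2(\Omega)}$; substituting into Lemma \ref{lemma-semi-linear-problem-truncation} gives the claim with $\tilde{C}=\tilde{C}_2+C_p\beta/c_0\cdot\tilde{C}_3$. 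The hardest step will be the cutoff argument for the localization error, since the cutoff $\eta_{j,k}\phi_j^h$ is not automatically in $\Vf_h(\omega_{j,k})$ and one must remove its $\Inodal$-image, thereby picking up extra factors from (A6); the delicate bookkeeping of these contrast- and $k$-dependent prefactors against the exponential gain is precisely what forces the logarithmic patch radius.
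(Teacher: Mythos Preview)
Your proposal is correct and mirrors the paper's proof almost step for step: the same competitor $\sum_j c_j(\lambda_j-\phi_{j,k}^h)$, the same upgrade of Lemma~\ref{l:decay} to a full localization-error bound via the cutoff argument from \cite{MaPe12}, the same $k^d$ overlap factor, the same energy-projection estimate $\|A^{1/2}\nabla\phi_j^h\|\le\|A^{1/2}\nabla\lambda_j\|$, and the same a-priori bound $|\umsh|_{H^1}\lesssim (C_p/c_0)\|g\|_{L^2}$. One small slip: assumption (A5) only says $\Inodal|_{V_H}$ is an \emph{isomorphism}, not the identity, so $\bar u_H\neq\Inodal(\umsh)$ in general---you have $\Inodal(\umsh)=\Inodal(\bar u_H)$ and then need the $H^1$-stability of $(\Inodal|_{V_H})^{-1}$, which the paper handles (equally tersely) via the $L^2$ route $\sum_j c_j^2\|\lambda_j\|_{L^2}^2\lesssim\|\Inodal\umsh\|_{L^2}^2\lesssim|\umsh|_{H^1}^2$ together with an inverse inequality.
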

\begin{proof}
We define $\wmshk\in\VmsHk$ by
\begin{align*}
\wmshk :=\sum_{j=1}^J \umsh(z_j) \lambda_{j,k}^{\operatorname*{ms}} = \sum_{j=1}^J \umsh(z_j) (\lambda_j-\phi_{j,k}^h)
\end{align*}
where $\umsh(z_j)$, $j=1,2,\ldots,J$, are the coefficients in the basis representation of $\umsh$ from Definition~\ref{definition-full-vmm-no-truncation}.
Hence, 
\begin{equation}\label{e:est1}
\begin{aligned}
\lefteqn{\min_{\vmshk\in\VmsHkh}\| A^{\frac{1}{2}} \nabla (\umsh - \vmshk) \|_{L^2(\Omega)}^2}\\
&\leq \| A^{\frac{1}{2}} \nabla (\umsh - \wmshk) \|_{L^2(\Omega)}^2\\
&\lesssim \sum_{j = 1}^J k^d \umsh(z_j)^2 \|A^{1/2}\nabla (\phi_{j}^h-\phi_{j,k}^h)\|^2_{L^2(\Omega)}.
\end{aligned}
\end{equation}
For details on the last step, we refer to Lemma 18 in  \cite{MaPe12}. Due to the Galerkin orthogonality for the corrector problems 
we get 
\begin{equation*}
 \|A^{1/2}\nabla (\phi_{j}^h-\phi_{j,k}^h)\|^2_{L^2(\Omega)}\lesssim \|A^{1/2}\nabla \phi_{j}^h\|^2_{L^2(\Omega\setminus\omega_{j,k-1})}.
\end{equation*}
We refer to the first part of the proof of Lemma 8 in \cite{MaPe12} for details.
The application of Lemma~\ref{l:decay}, \eqref{e:Tnodal} and some inverse inequality yield
\begin{eqnarray*}
 \|A^{1/2}\nabla (\phi_{j}^h-\phi_{j,k}^h)\|^2_{L^2(\Omega)}&\lesssim & e^{-2(\alpha/\beta)^{1/2} k}\|A^{1/2}\nabla \phi_j^h\|_{L^2(\Omega)}^2\\
&\leq & e^{-2(\alpha/\beta)^{1/2} k}\|A^{1/2}\nabla\lambda_j^h\|_{L^2(\Omega)}^2\\
&\leq & \beta e^{-2(\alpha/\beta)^{1/2} k} \Vert H^{-1} \Vert_\infty^2 \|\lambda_j^h\|_{L^2(\Omega)}^2
\end{eqnarray*}
By choosing $k = m \cdot \log(\|H^{-1}\|_{L^\infty(\Omega)})$ with $m \in \mathbb{N}$, we can achieve an arbitrary fast polynomial convergence of this term in $H$ (this will also cancel the $k^d$ term). 
However, we bound this by a linear convergence since this is fastest rate that we can obtain for the whole error. Finally, the combination of this estimate and \eqref{e:est1} plus 
$\sum_{j = 1}^J \umsh(z_j)^2 \|\lambda_{j}^h\|^2_{L^2(\Omega)}\lesssim\|\Inodal \umsh\|^2_{L^2(\Omega)}\lesssim \|\nabla \umsh\|_{L^2(\Omega)}^2 \le C_p^2 c_0^{-2}\|g\|_{L^2(\Omega)}^2 $ yields the assertion.
\end{proof}

\section{The Multiscale Newton scheme}\label{s:newtonms}

In this section we discuss a solution algorithm for handling the nonlinear multiscale problem (\ref{equation-full-vmm}). For this purpose, we consider a damped Newton's method in the multiscale space $\VmsHkh$. Remind the considered problem: we are looking for $u\in H_{0}^{1}( \Omega )$ with
\begin{align*}
\langle B(u), v \rangle_{H^{-1},H^1_0}  =\langle g, v\rangle\quad\text{for all } v\in H^1_0(\Omega),
\end{align*}
where we introduced the notation
\begin{align*}
\langle B(v), w \rangle_{H^{-1},H^1_0} := \langle A\nabla v,\nabla w\rangle+\langle F(\cdot,v,\nabla v),w\rangle.
\end{align*}
Here, $B : H^1_0(\Omega) \rightarrow H^{-1}(\Omega)$
is a hemicontinuous and strongly monotone operator due to assumption (A3). As already mentioned, under these assumptions, the Browder-Minty theorem yields a unique solution of the above problem.
However, we will need an additional assumption on $F$ to guarantee that the Newton scheme converges:
\begin{assumption}
\label{assumption-d-F-lipschitz}
Let $DF(x,\cdot,\cdot)$ denote the Jacobian matrix of $F(x,\cdot,\cdot)$.
\begin{enumerate}
 \item[(A8)] We assume that there exists some constant $L_D\ge0$ so that for almost every $x$ in $\Omega$ and for all $(\xi_1,\zeta_1) \in \R \times \R^d$ and $(\xi_2,\zeta_2) \in \R \times \R^d$
\begin{align*}
 | DF(x,\xi_1,\zeta_1) - DF(x,\xi_2,\zeta_2)| \le L_D | (\xi_1,\zeta_1) - (\xi_2,\zeta_2)|,
\end{align*}
i.e. $F(x,\cdot,\cdot)\in W^{2,\infty}(\R \times \R^d)$.
\end{enumerate}
\end{assumption}
For clarity of the presentation we will leave out several indices within this section. In particular, we make use of the following notation:
\begin{definition}
\label{simplifying-notation}For simplicity, we define
\begin{align*}
V^{\operatorname*{\operatorname*{ms}}}:=\VmsHkh \quad \mbox{with basis} \enspace \lambda_{j}^{\operatorname*{ms}}:=\lambda_{j,k}^{\operatorname*{ms}}=\lambda_j-\phi_{j,k}^h \enspace \mbox{for} \enspace 1 \le j \le J.
\end{align*}
Furthermore, we denote $\ums := \umshk$. Additionally, let
\begin{align*}
 \partial_1 F(x,\xi,\zeta) := \partial_{\xi} F(x,\xi,\zeta) \quad \mbox{and} \quad \partial_2 F(x,\xi,\zeta) := \partial_{\zeta} F(x,\xi,\zeta).
\end{align*}
\end{definition}

We now describe the Newton strategy in detail. The fully discrete multiscale problem to solve reads:
\begin{eqnarray*}
\mbox{find} \enspace \ums \in \Vms: \quad \langle A\nabla \ums,\nabla \lambda_{j}^{\operatorname*{ms}} \rangle+\langle F(\cdot,\ums,\nabla \ums), \lambda_{j}^{\operatorname*{ms}} \rangle - \langle g, \lambda_{j}^{\operatorname*{ms}} \rangle = 0
\end{eqnarray*}
for all $1 \le j \le J$. Again, using Browder-Minty, $\ums$ exists and is unique. Accordingly, we get the following well posed algebraic version of the problem:
\begin{eqnarray*}
\mbox{find} \enspace \bar{\alpha} \in \R^J: \quad G(\bar{\alpha}) = 0
\end{eqnarray*}
and where $G : \R^J \rightarrow \R^J$ is given by
\begin{eqnarray}
\label{definition-G-alpha-k}\lefteqn{\left({G}(\alpha)\right)_l}\\
\nonumber&:=& \sum_{j=1}^J \alpha_j \langle A\nabla\lambda_{j}^{\operatorname*{ms}} ,\nabla\lambda_{l}^{\operatorname*{ms}} \rangle+\langle F(\cdot,\sum_{j=1}^J \alpha_j \lambda_{j}^{\operatorname*{ms}} ,\sum_{j=1}^J \alpha_j\nabla \lambda_{j}^{\operatorname*{ms}} ),\lambda_{l}^{\operatorname*{ms}} \rangle - \langle g,\lambda_{l}^{\operatorname*{ms}} \rangle.
\end{eqnarray}
We have the relation $\ums = \sum_{j=1}^J \bar{\alpha}_j \lambda_{j}^{\operatorname*{ms}}$. Before we can apply the Newton method to (\ref{definition-G-alpha-k}), we need to ensure that the iterations of the scheme are well defined. The following lemma ensures this:
\begin{lemma}
\label{lemma-newton-scheme-well-posed}Let $(X,\|\cdot\|_X)$ denote a Hilbert space with dual space $X^{\prime}$. Let furthermore $B : X \rightarrow X^{\prime}$ be a  hemicontinuous, Fr\'{e}chet differentiable and strongly monotone operator on $X$, i.e. there exists $c_0>0$ so that
\begin{align*}
\langle B(v) - B(w), v-w \rangle_X &\ge c_0 \| v - w\|_{X}^2 \quad \mbox{for all} \enspace v,w \in X \enspace \mbox{and}\\
s \mapsto \langle B(u+sv) , w \rangle_X
\end{align*}
is a continuous function on $[0,1]$ for all $u,v,w \in X$. Let $X_N$ denote a finite dimensional subspace with basis $\{\psi_1,...,\psi_N\}$ and let $b: \R^N \rightarrow V_N$ define the linear bijection with $b(\alpha):=\sum_{i=1}^N \alpha_i \psi_i$.
If $G(\alpha):=b^{-1}(B(b(\alpha)))$, then the Jacobi matrix $DG(\alpha) \in \R^{n \times n}$ has only positive eigenvalues.
\end{lemma}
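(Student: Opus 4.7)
The plan is to identify the Jacobian $DG(\alpha)$ explicitly and then derive positivity from a linearized form of the strong monotonicity of $B$. Starting from the componentwise formula $(G(\alpha))_l = \langle B(b(\alpha)), \psi_l\rangle$ consistent with \eqref{definition-G-alpha-k}, the Fr\'echet differentiability of $B$ yields
\[
 (DG(\alpha))_{lj} \;=\; \langle DB(b(\alpha))\,\psi_j,\psi_l\rangle,
\]
so that $DG(\alpha)$ is precisely the matrix of the bilinear form $(v,w)\mapsto \langle DB(b(\alpha))\,v,w\rangle$ on $X_N\times X_N$ in the basis $\{\psi_1,\dots,\psi_N\}$.

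Next I would linearize the strong monotonicity. Substituting $w=u+tv$ for $t>0$ into $\langle B(v)-B(w),v-w\rangle_X\ge c_0\|v-w\|_X^2$ and dividing by $t^2$ gives
\[
 \tfrac{1}{t}\langle B(u+tv)-B(u),v\rangle \;\geq\; c_0\|v\|_X^2.
\]
Passing $t\to 0^+$ via Fr\'echet differentiability produces the pointwise coercivity $\langle DB(u)v,v\rangle\ge c_0\|v\|_X^2$ for all $u,v\in X$. Translated to the matrix, for every $\xi\in\R^N\setminus\{0\}$, setting $v:=b(\xi)\in X_N\setminus\{0\}$ (by injectivity of $b$),
\[
 \xi^{T}DG(\alpha)\,\xi \;=\; \langle DB(b(\alpha))\,v,v\rangle \;\geq\; c_0\|v\|_X^2 \;>\; 0;
\]
equivalently, the symmetric part $\tfrac12(DG(\alpha)+DG(\alpha)^{T})$ is positive definite.

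The eigenvalue claim then follows by a standard complexification. For any eigenvalue $\lambda\in\mathbb{C}$ with eigenvector $z=\xi+i\eta\in\mathbb{C}^N\setminus\{0\}$, the identity $\bar z^{T}DG(\alpha)\,z=\lambda|z|^2$ and a direct expansion give
\[
 \operatorname{Re}(\lambda)\,|z|^2 \;=\; \xi^{T}DG(\alpha)\,\xi + \eta^{T}DG(\alpha)\,\eta,
\]
and the right-hand side is strictly positive since at least one of $\xi,\eta$ is nonzero. Consequently every eigenvalue of $DG(\alpha)$ has strictly positive real part; in particular every real eigenvalue is strictly positive, and $DG(\alpha)$ is invertible, which is what the damped Newton iteration below actually requires.

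The main obstacle and the delicate point of the statement is that the form $\langle DB(u)\cdot,\cdot\rangle$ is in general not symmetric: in the semilinear setting the contribution of $\partial_{\zeta}F$ produces a genuine skew (advection-type) term, so $DG(\alpha)$ itself is not symmetric and may a priori admit genuinely complex eigenvalues. Strong monotonicity only controls the symmetric part, so the argument delivers positivity of the real parts rather than literal real positivity; real positive eigenvalues would require additional structure such as $B$ arising from a convex potential. We therefore read ``only positive eigenvalues'' in the sense relevant for the subsequent Newton analysis, namely $\operatorname{Re}\lambda>0$ for every eigenvalue $\lambda$ of $DG(\alpha)$, which is precisely the regularity of $DG(\alpha)$ needed to define each Newton update.
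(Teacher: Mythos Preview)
Your argument follows essentially the same route as the paper: linearize the strong monotonicity to obtain the pointwise coercivity $\langle DB(u)v,v\rangle\ge c_0\|v\|_X^2$, and then translate this to a positivity statement for the quadratic form $\xi\mapsto \xi^T DG(\alpha)\,\xi$ on $\R^N$. The only cosmetic difference is that you work directly with the Euclidean pairing via the componentwise formula $(DG(\alpha))_{lj}=\langle DB(b(\alpha))\psi_j,\psi_l\rangle$, whereas the paper introduces the pulled-back inner product $(\alpha,\beta)_b:=\langle b(\alpha),b(\beta)\rangle_X$ and shows $(DG(\alpha)\xi,\xi)_b\ge c_0\|\xi\|_b^2$; both amount to the same inequality.

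Your discussion of the eigenvalue conclusion is in fact sharper than the paper's. The paper stops at positivity of the quadratic form and declares the result; you correctly observe that, since the advection-type contribution from $\partial_\zeta F$ makes $DG(\alpha)$ genuinely nonsymmetric, positivity of $\xi^T DG(\alpha)\,\xi$ yields only $\operatorname{Re}\lambda>0$ for every eigenvalue, not that all eigenvalues are real. Your complexification argument makes this precise, and your remark that $\operatorname{Re}\lambda>0$ (hence invertibility of $DG(\alpha)$ with the uniform bound on $(DG(\alpha))^{-1}$) is exactly what the subsequent damped Newton analysis actually uses is well taken. So your proof establishes what is provable from the hypotheses and what is needed downstream; the literal phrase ``only positive eigenvalues'' in the statement should indeed be read in this weaker sense.
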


\begin{proof}
Let $B^{\prime}$ denote the Fr\'{e}chet derivative of $B$, given by
\begin{align*}
B^{\prime}(u)(v) = \lim_{s \rightarrow 0} \frac{B(u+sv) - B(u)}{s} \quad \mbox{for} \enspace u,v \in X.
\end{align*}
This and the strong monotonicity yield:
\begin{align}
\nonumber \langle B^{\prime}(u)(v), v \rangle_{H^{-1},H^1_0} &= \lim_{s \rightarrow 0} \frac{(B(u+sv) - B(u))(v)}{s} \\
\label{coercivity-frechet-derivative}&= \lim_{s \rightarrow 0} \frac{1}{s^2} (B(u+sv) - B(u))(u+sv-u) \\
\nonumber &\ge \lim_{s \rightarrow 0} \frac{1}{s^2} c_0 \|sv\|^2 = c_0 \|v\|^2.
\end{align}
Next, observe that $b$ induces an inner product on $\R^N$ by $(\alpha_1,\alpha_2)_b:=\langle b(\alpha_1),b(\alpha_2)\rangle_X$. Let $\alpha:=b^{-1}(u)$ then we get
\begin{align*}
B^{\prime}(u)(\psi_i) &= \lim_{s \rightarrow 0} \frac{B(u+s\psi_i) - B(u)}{s}\\
&= \lim_{s \rightarrow 0} \frac{(b\circ b^{-1})(B (\sum_{j=1}^N (\alpha_{j} + s \delta_{ij}) \psi_j ) - (b\circ b^{-1})(B(\sum_{j=1}^N \alpha_{j} \psi_j ))}{s} \\
&= b\left( \lim_{s \rightarrow 0} \frac{ G(\alpha + s e_i) - G(\alpha)}{s} \right) \\
&= b (D_{\alpha}G(\alpha) e_i).
\end{align*}
Using this, we get for arbitrary $\xi \in \R^N$ and $v_{\xi}:=b(\xi)$:
\begin{align*}
( D_{\alpha}G(\alpha) \xi, \xi )_b &= \sum_{i,j}^N \xi_i \xi_j ( D_{\alpha}G(\alpha) e_i, e_j )_b \\
&= \sum_{i,j}^N \xi_i \xi_j ( b(D_{\alpha}G(\alpha) e_i), b(e_j) )_X \\
&= \sum_{i,j}^N \xi_i \xi_j ( B^{\prime}(u)(\psi_i), \psi_j )_X \\
&= ( B^{\prime}(u)(v_{\xi}),v_{\xi} )_X \overset{(\ref{coercivity-frechet-derivative})}{\ge} c_0 \|v_{\xi}\|_X^2 = c_0 \|\xi\|_b^2.
\end{align*}
Since all norms in $\R^N$ are equivalent we have the desired result.
\end{proof}
%

%

$\\$
Now, we can apply the damped Newton method for solving the nonlinear algebraic equation $G(\bar{\alpha}) = 0$. If $D_{\alpha}G$ denotes the Jacobian matrix of $G$, we get the following iteration scheme:
\begin{eqnarray*}
 \alpha^{(n+1)} := \alpha^{(n)} + \triangle \alpha^{(n)},
\end{eqnarray*}
where $\triangle \alpha^{(n)}$ solves
\begin{eqnarray}
\label{newton-iteration}D_{\alpha}G(\alpha^{(n)}) \triangle \alpha^{(n)} = - G(\alpha^{(n)}).
\end{eqnarray}
Here, $D_{\alpha}G$ is given by:
\begin{align*}
D_{\alpha_i}\left(G(\alpha)\right)_l &:=
\langle A\nabla \lambda_{i}^{\operatorname*{ms}},\nabla \lambda_{l}^{\operatorname*{ms}}\rangle + \langle \partial_1 F(\cdot,\sum_{j=1}^J \alpha_j \lambda_{j}^{\operatorname*{ms}},\sum_{j=1}^J \alpha_j\nabla \lambda_{j}^{\operatorname*{ms}}) \lambda_{i}^{\operatorname*{ms}},\lambda_{l}^{\operatorname*{ms}}\rangle\\
&\qquad + \langle \partial_2 F(\cdot,\sum_{j=1}^J \alpha_j \lambda_{j}^{\operatorname*{ms}},\sum_{j=1}^J \alpha_j\nabla \lambda_{j}^{\operatorname*{ms}})\cdot \nabla \lambda_{i}^{\operatorname*{ms}},\lambda_{l}^{\operatorname*{ms}}\rangle.
\end{align*}
Lemma \ref{lemma-newton-scheme-well-posed} ensures that equation (\ref{newton-iteration}) has a unique solution $\triangle \alpha^{(n)}$, i.e. that the Newton iteration is well posed. Since $G\in C^1(\R^N)$ has a nonsingular Jacobian matrix $D_{\alpha}G$ (due to Lemma \ref{lemma-newton-scheme-well-posed}) and since we have Lipschitz-continuity of $D_{\alpha}G$ (due to Assumption \ref{assumption-d-F-lipschitz}), we have the that Newton scheme converges quadratically as long as the starting value is close enough to the exact solution (c.f. \cite{Dennis:Schnabel:1996}). However, this means that we can only guarantee local convergence of the method. In order to ensure global convergence, we can use a simple damping strategy due to Armijo \cite{Armijo:1966}. Here we are looking for a damping parameter $\zeta \in (0,1]$ so that $\alpha^{(n+1)} :=  \alpha^{(n)} + \zeta \triangle \alpha^{(n)}$ with the property $|G( \alpha^{(n+1)} )| < (1-\frac{\zeta}{2}) |G(\alpha^{(n)} )|$. Under the same assumptions (i.e. (A1)-(A3) and (A8)), we get that the damped Newton scheme 
converges,
i.e. there exists a nonempty (damping) interval $[\zeta_0,\zeta_1]\subset(0,1)$, so that we obtain the above 'damping property' for any $\zeta \in [\zeta_0,\zeta_1]$. Here, $\zeta_0>0$ is independent of $\alpha^{(n)}$ and $\triangle \alpha^{(n)}$, which prevents $\zeta_1\rightarrow 0$. The existence of a damping parameter so that $|G( \alpha^{(n+1)} )| < |G(\alpha^{(n)} )|$ is an easy observation if we look at the function $h(\zeta):=| G(\alpha^{(n)} + \zeta \triangle \alpha^{(n)})|^2$ which fulfills $h(0)>0$ and $h^{\prime}(0)=-2 G(\alpha^{(n)})\cdot G(\alpha^{(n)})<0$. The existence of a uniform lower bound $\zeta_{0}>0$ was proved by Kelley \cite{Kelley:1995}, Lemma 8.2.1 and Theorem 8.2.1 therewithin. The results by Kelley require Lipschitz continuity of $D_{\alpha}G$ (assumption (A8)) and uniform boundedness of $| (D_{\alpha} G(\alpha))^{-1} |$. The latter one is fulfilled since the proof of Lemma \ref{lemma-newton-scheme-well-posed} shows that the smallest eigenvalue of $(D_{\alpha} G(\alpha))$ is equal or larger than $c_0$. This implies that the largest eigenvalue of $(D_{\alpha} G(\alpha))^{-1}$ is bounded by $c_0^{-1}$, hence $| (D_{\alpha} G(\alpha))^{-1} |$ is uniformly bounded. In summary we have globally linear convergence of the method (using damping) and locally (i.e. in an environment of the solution) even quadratic convergence using the classical Newton scheme without damping.

$\\$
With these considerations, we can state the full algorithm below. Recall that $\mathcal{N}_H$ denotes the set of interior vertices of $\triH$ and for $z_j\in\mathcal{N}_H$, $\lambda_j\in V_H$ denotes the corresponding nodal basis function.

Note that in the presented algorithm, each iteration starts with the damping parameter $\zeta_n = 1$ and we do not use damping parameters from previous iterations. The advantage is that we automatically get quadratic convergence of the Newton scheme as soon as we leave the region where damping is required. Therefore, damping is only used when really necessary.

\begin{algorithm}
\label{algorithm=newton-method-damping}

 \rule{0.99\textwidth}{.7pt} \\
Algorithm: dampedNewtonLOD( $abstol$, $reltol$, $\alpha^{(0)}$, $k$ )
 \rule{0.99\textwidth}{.7pt} \\
In parallel \ForEach{$z_j \in \mathcal{N}_H $}
 {
   compute $\phi_{j,k}^h \in \Vf_h(\omega_{j,k})$ with
\begin{align*}
 \langle A\nabla\phi_{j,k}^h,\nabla w\rangle=\langle A\nabla \lambda_j,\nabla w\rangle\quad\text{for all }w\in \Vf_h(\omega_{j,k}).
\end{align*}
 }
Set $\VmsHkh := \mbox{\rm span}\{\lambda_j-\phi_{j,k}^h\;\vert\; 1 \le j \le J\}$. Set $\lambda_{j,k}^{\operatorname*{ms}}=\lambda_j-\phi_{j,k}^h$.\\
 \rule{0.99\textwidth}{.7pt} \\
Set $\alpha^{(n)} := \alpha^{(0)}$. Set $\umshkn :=\sum_{j=1}^J \alpha_j^{(n)} \lambda_{j,k}^{\operatorname*{ms}}$. Set \vspace{-10pt}
\begin{align*}
\left(G(\alpha)\right)_i :=
\sum_{j=1}^J \alpha_j \langle A\nabla \lambda_{j,k}^{\operatorname*{ms}},\nabla \lambda_{i,k}^{\operatorname*{ms}} \rangle
+\langle F(\cdot,\sum_{j=1}^J \alpha_j \lambda_{j,k}^{\operatorname*{ms}},\sum_{j=1}^J \alpha_j\nabla \lambda_{j,k}^{\operatorname*{ms}})-g,\lambda_{i,k}^{\operatorname*{ms}}\rangle.
\end{align*}
Set $tol := | G(\alpha^{(0)}) |_2 \hspace{2pt} \cdot \hspace{2pt} reltol + abstol$. \\
 \rule{0.99\textwidth}{.7pt} \\
\While{$| G(\alpha^{(n)}) |_2 > tol$}
{
Set $\umshkn :=\sum_{j=1}^J \alpha_j^{(n)} \lambda_{j,k}^{\operatorname*{ms}}$.\\
Define the entries of the stiffness matrix $M^{(n)}$ by:
\begin{eqnarray*}
M_{il}^{(n)} &:=& \langle A\nabla \lambda_{l,k}^{\operatorname*{ms}},\nabla \lambda_{i,k}^{\operatorname*{ms}}\rangle + \langle \partial_1 F(\cdot,\umshkn,\nabla \umshkn) \lambda_{l,k}^{\operatorname*{ms}},\lambda_{i,k}^{\operatorname*{ms}}\rangle\\
&\enspace&\qquad + \langle \partial_2 F(\cdot,\umshkn,\nabla \umshkn) \cdot \nabla \lambda_{l,k}^{\operatorname*{ms}},\lambda_{i,k}^{\operatorname*{ms}}\rangle.
\end{eqnarray*}
Define the entries of the right hand side by:
\begin{eqnarray*}
 F_{i}^{(n)} := \langle g,\lambda_{i,k}^{\operatorname*{ms}}\rangle - \langle A\nabla \umshkn,\nabla \lambda_{i,k}^{\operatorname*{ms}} \rangle - \langle F(\cdot,\umshkn,\nabla \umshkn),\lambda_{i,k}^{\operatorname*{ms}}\rangle.
\end{eqnarray*}
Find $(\triangle \alpha)^{(n+1)} \in \R^J$, with
\begin{eqnarray*}
 M^{(n)} (\triangle \alpha)^{(n+1)} = F^{(n)}.
\end{eqnarray*}
Set $\zeta_n := 1$. Set $\alpha^{(n+1)} :=  \alpha^{(n)} + \zeta_n \triangle \alpha^{(n)}$.\\
\While{ $|G( \alpha^{(n+1)} )| \ge (1-\frac{\zeta_n}{2}) |G(\alpha^{(n)} )|$  }
{
 Set $\zeta_n := \frac{1}{2} \zeta_n$. Set $\alpha^{(n+1)} :=  \alpha^{(n)} + \zeta_n \triangle \alpha^{(n)}$.
}

Set $\alpha^{(n)} := \alpha^{(n+1)}$. Set $tol := | G(\alpha^{(n)}) |_2 \hspace{2pt} \cdot \hspace{2pt} reltol + abstol$.
}
 \rule{0.99\textwidth}{.7pt} \\
Set $\umshkn :=\sum_{j=1}^J \alpha_j^{(n)} \lambda_{j,k}^{\operatorname*{ms}}$.
 \rule{0.99\textwidth}{.7pt} \\
\end{algorithm}

\begin{proposition}
We use the notation stated in Definition \ref{simplifying-notation}. Let $u \in H^1_0(\Omega)$ denote the solution of (\ref{originalproblem}), let $u_h \in V_h$ denote the solution of (\ref{high-resolution-fem-equation}) and let $\ums \in \Vms$ denote the solution of (\ref{equation-full-vmm})). Furthermore, we let $\umsn:=\umshkn$ define the $n$'th iterate from the damped Newton Variational Multiscale Method stated in the algorithm. Under assumptions (A1)-(A8), the Newton step (\ref{newton-iteration}) is well posed, yields an unique solution and $\umsn$ converges at least linearly to $\ums$. If furthermore $k\gtrsim\log(\|H^{-1}\|_{L^\infty(\Omega)})$, the following a-priori error estimates hold true:
\begin{eqnarray*}
\| u - \ums \|_{H^1(\Omega)} \le C \left( \| H \|_{L^{\infty}(\Omega)} + \|u-u_h\|_{H^1(\Omega)}\right)
\end{eqnarray*}
where $C$ is a generic constant with the property $C=$O$(1)$ (see Theorem \ref{main-result} and \ref{corollary-semi-linear-problem-local} for details) and
\begin{eqnarray*}
\| \ums - \umsn \|_{H^1(\Omega)} \le  L_n(H) \| \ums - u^{\operatorname*{ms},(n-1)} \|_{H^1(\Omega)}.
\end{eqnarray*}
Here, we have $L_n(H)<1$.

If $u^{\operatorname*{ms},(n-1)}$ is sufficiently close to $\ums$, we even get quadratic convergence of the Newton scheme, i.e.:
\begin{eqnarray*}
\| \ums - \umsn \|_{H^1(\Omega)} \le  L_n(H) \| \ums - u^{\operatorname*{ms},(n-1)} \|_{H^1(\Omega)}^2.
\end{eqnarray*}
with
\begin{eqnarray*}
L_n(H) \le \frac{\|(D_{\alpha}G)^{-1}\|_{L^{\infty}(\R^N)}}{L},
\end{eqnarray*}
where $L$ denotes the Lipschitz-constant of $D_{\alpha}G$. As indicated, $L_n(H)$ typically depends on the mesh size. However, in some cases of semi-linear problems, it is possible to bound $L_n(H)$ independent of the triangulation (c.f. \cite{Karatson:2012}). In particular, if $F(x,u,\nabla u)=F(x,u)$ (i.e. no dependency on $\nabla u$) we get that $L_n(H)=L_n$ independent of the underlying mesh. The proof can be obtained analogously to the proof of Proposition 4.1 in \cite{Karatson:2012}. The proof fails for general $F(x,u,\nabla u)$.
\end{proposition}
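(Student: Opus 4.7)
The proposition bundles three essentially independent ingredients, so I would address them in turn.

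First, the well-posedness of the Newton step follows directly from Lemma~\ref{lemma-newton-scheme-well-posed}: under (A1)--(A3) the operator $B$ is hemicontinuous and strongly monotone with constant $c_0$, under (A8) it is Fr\'echet differentiable, and $\VmsHkh$ is a finite-dimensional subspace of $H^1_0(\Omega)$. Hence $D_\alpha G(\alpha^{(n)})$ has only positive eigenvalues, is nonsingular, and (\ref{newton-iteration}) admits a unique solution $\triangle\alpha^{(n)}$ for every iterate. The a-priori estimate for $\|u-\ums\|_{H^1(\Omega)}$ then drops out of the triangle inequality
\begin{equation*}
\|u-\ums\|_{H^1(\Omega)} \le \|u-u_h\|_{H^1(\Omega)} + \|u_h-\ums\|_{H^1(\Omega)}
\end{equation*}
combined with Theorem~\ref{corollary-semi-linear-problem-local} applied to the second term, after absorbing all the constants $\tilde C_2,\tilde C_3,C_p,\beta/c_0$ into $C$.

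Second, for the global linear convergence of the damped iterates I would argue as already sketched in the text preceding the algorithm. Fixing $\alpha^{(n)}$, the function $h(\zeta):=|G(\alpha^{(n)}+\zeta\triangle\alpha^{(n)})|^2$ satisfies $h(0)>0$ and $h'(0)=-2|G(\alpha^{(n)})|^2<0$, so a strictly positive $\zeta$ producing a decrease exists. Because $D_\alpha G$ is Lipschitz by (A8) and $(D_\alpha G(\alpha))^{-1}$ is uniformly bounded in operator norm by $c_0^{-1}$ (this is exactly what the last line of the proof of Lemma~\ref{lemma-newton-scheme-well-posed} delivers), Lemma~8.2.1 and Theorem~8.2.1 of Kelley~\cite{Kelley:1995} provide a uniform lower bound $\zeta_0>0$ on the admissible damping parameters and the Armijo-type decrease $|G(\alpha^{(n+1)})|\le(1-\zeta_n/2)|G(\alpha^{(n)})|$. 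Translating this algebraic inequality back into $\Vms$ through the basis isomorphism $b$ of Lemma~\ref{lemma-newton-scheme-well-posed} and invoking equivalence of norms on the finite-dimensional space $\VmsHkh$ yields the claimed $H^1$-contraction with $L_n(H)<1$.

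Third, for the quadratic regime near $\bar\alpha$ I would invoke the classical Newton--Kantorovich identity
\begin{equation*}
\alpha^{(n+1)} - \bar\alpha = -D_\alpha G(\alpha^{(n)})^{-1}\bigl(G(\alpha^{(n)})-G(\bar\alpha)-D_\alpha G(\alpha^{(n)})(\alpha^{(n)}-\bar\alpha)\bigr).
\end{equation*}
The Lipschitz continuity of $D_\alpha G$ guaranteed by (A8) bounds the parenthesized remainder by $\tfrac{L}{2}|\alpha^{(n)}-\bar\alpha|^2$, while $\|(D_\alpha G(\alpha))^{-1}\|\le c_0^{-1}$ supplies the prefactor. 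Once $\alpha^{(n-1)}$ is close enough to $\bar\alpha$ the full Newton step $\zeta_n=1$ is accepted by the Armijo test, so pushing the estimate back through $b$ produces the stated quadratic bound with $L_n(H)$ of the announced form $\|(D_\alpha G)^{-1}\|/L$. The final sentences concerning independence of $L_n$ from $H$ in the case $F(x,u,\nabla u)=F(x,u)$ are imported verbatim from Proposition~4.1 of Kar\'ats\'on~\cite{Karatson:2012}, to which I would simply refer.

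The main obstacle is bookkeeping rather than mathematics: the modified nodal basis $\{\lambda_{j,k}^{\operatorname*{ms}}\}$ is neither $L^2$- nor $H^1$-orthonormal, so the basis map $b:\mathbb{R}^J\to\VmsHkh$ distorts norms, and this distortion --- which in general depends on $H$ and $h$ --- is precisely what is hidden inside $L_n(H)$. Removing this $H$-dependence is genuinely more delicate and is where a separate Kar\'ats\'on-type argument becomes indispensable in the gradient-independent case.
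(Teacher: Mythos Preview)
Your proposal is correct and follows essentially the same route as the paper. In fact, the paper does not supply a separate proof environment for this proposition at all: it is a summary statement collecting results already established in the surrounding text---Lemma~\ref{lemma-newton-scheme-well-posed} for well-posedness, Theorems~\ref{main-result} and~\ref{corollary-semi-linear-problem-local} for the a-priori bound, the discussion after~(\ref{newton-iteration}) (invoking \cite{Dennis:Schnabel:1996}, \cite{Armijo:1966}, and Kelley's Lemma~8.2.1/Theorem~8.2.1~\cite{Kelley:1995}) for global linear and local quadratic convergence, and the reference to \cite{Karatson:2012} for mesh-independence in the gradient-free case---and you have identified and assembled exactly these ingredients.
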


\begin{remark}
Note that the proposed method only requires the computation of the multiscale basis $\{ \lambda_{j}^{\operatorname*{ms}}| \hspace{2pt} 1 \le j \le  J\}$ once at the beginning. For each iteration step of the damped Newton scheme, (\ref{newton-iteration}) is a low dimensional linear problem that can reuse the initially computed multiscale basis. If the multiscale basis was computed using the nonlinear term $F$, local corrector problems would have to be solved for each Newton step newly, making the whole procedure significantly more expensive. We also note that assemblation of the tangent matrix $M^{(n)}$ and the residual $F^{(n)}$ still requires a quadrature rule that captures the fine scale features. Depending on the type of the nonlinearity this might have to be done newly for each iteration step, making the quadrature rule a significant part of each Newton step.
\end{remark}

\section{Numerical Experiment}
\label{s:numericalexperiments}

As mentioned in the introduction, Richards-type equations can be an application of our LOD-Newton framework. In general, the stationary Richards equation cannot necessarily be described by a monotone operator, however depending on the chosen model and the considered hydrological effects (including hysteresis, root uptake, friction, reaction fronts etc.) monotone operators can arise in certain applications. One explicit example is the (regularized) time-discretized Kirchhoff transformed Richards equation regarded in \cite{Berninger:Kornhuber:Sander:2011}. For the case that there is no Signorini boundary condition prescribed, the problem that has to be solved for each time step corresponds to a nonlinear elliptic monotone problem (on the full space) that also fulfills the required assumption of Lipschitz-continuity.

Let us now consider the stationary Kirchhoff-transformed Richards equation
\begin{align}
\label{ri-eq-model-div-form}\nabla \cdot ( K \nabla u ) - \nabla \cdot ( K \hspace{2pt} kr(M(u)) \vec{g}) = f,
\end{align}
where $u$ denotes the generalized pressure, $K$ the hydraulic conductivity and $kr$ the relative permeability depending on the saturation. $kr$ is a monotone increasing function with values between $0$ and $1$ (typically bounded away from $0$ to avoid degeneracy). If we have already full saturation, water cannot be conducted anymore, if the soil is completely dry (saturation is zero), water can be perfectly conducted. Formulas for $kr$ were e.g. provided by Burdine \cite{Burdine:1953} and Mualem \cite{Mualem:1976}. In applications the variations of the hydraulic conductivity $K$ are assumed to be constant (or at least slow) in gravity direction $\vec{g}=(0,0,\vec{g}_z)$, where $\vec{g}_z$ denotes the gravity factor of $9.81 m / s^2$. Soil probes are often only taken once in vertical direction, but a lot of samples are required to describe the variations of conductivity in horizontal direction. As a reduction of complexity one can often assume that $\nabla \cdot ( K \vec{g}) =  \partial_z( K_{zz}  \vec{g}_z ) = 0$ to consider the reduced equation
\begin{align}
\label{ri-eq-model}\nabla \cdot ( K \nabla u ) -  \hspace{2pt}  (kr \circ M)^{\prime}(u) \hspace{2pt} (K \vec{g}) \cdot \nabla u = f.
\end{align}
Here we have $M(u):=\theta \circ \kappa^{-1}$, where $\theta$ denotes the saturation (depending on the pressure) and $\kappa^{-1}$ the inverse of the Kirchhoff transformation $\kappa( p ):=\int_0^p kr(\theta(q)) \hspace{2pt} dq$. The saturation $\theta$ can be obtained by the capillary pressure relation (soil-water retention curves). Various explicit formulas for $\theta$ are available, see e.g. Van Genuchten \cite{vanGenuchten:1980}, Brooks-Corey \cite{Brooks:Corey:1964} or the Gardner model \cite{Gardner:1958}. Depending on the chosen model $(kr \circ M)^{\prime}$ might not be a Lipschitz continuous function, still regularization is possible. In the following numerical experiment, we consider a test problem that has the structure derived from a regularized Burdine-Brooks-Corey model. The corresponding explicit formulas for $(kr \circ M)$ are taken from \cite{Berninger:2007}. Contrary to the model (\ref{ri-eq-model}), we use a nonlinear advection term that is faster oscillating than the diffusion term. The reason is that we want to emphasize our claim, that the oscillations of the nonlinearity $F$ do in fact not influence the convergence. Before stating the test problem related to (\ref{ri-eq-model}), let us note that the method and the analytical results of this paper directly transfer to equations in divergence form like (\ref{ri-eq-model-div-form}), i.e. the gradient in the weak formulation can be on the test function, as long as $F(x,u)$ does not dependent on the gradient $\nabla u$.

We consider a the following nonlinear advection-diffusion problem: let $\Omega := ]0,1[^2$ and $\epsilon:=0.05$. Find $u^{\epsilon}$ with
\begin{align*}
- \nabla \cdot \left( A^{\epsilon}(x) \nabla u^{\epsilon}(x) \right) +  \frac{1}{2} F^{\epsilon}(x,u^{\epsilon}) \partial_{x_2} u^{\epsilon}(x)
&= -\frac{3}{10} \quad \mbox{in} \enspace \Omega \\
u^{\epsilon}(x) &= 0 \hspace{28pt} \mbox{on} \enspace \partial \Omega,
\end{align*}
where $A^{\epsilon}$ is given by
\begin{eqnarray*}
A^{\epsilon}(x_1,x_2):= \frac{1}{8 \pi^2} \left(\begin{matrix}
                         2(2 + \mbox{\rm cos}( 2 \pi \frac{x_1}{\epsilon} ))^{-1}  & 0 \\
                         0 & 1 + \frac{1}{2}\mbox{\rm cos}( 2 \pi \frac{x_1}{\epsilon} )
                        \end{matrix}\right)
\end{eqnarray*}
and
\begin{align*}
F^{\epsilon}(x,u) :=\frac{1}{8 \pi^2} \left( 2 + \cos( 2 \pi \frac{x_1}{\epsilon^{\frac{3}{2}}} ) \right)
\begin{cases} \sqrt{ \frac{u}{2}  + \frac{3}{2}} \enspace &\mbox{for} \enspace -3 \le u \le -\frac{5}{4}\\
p(u)\enspace &\mbox{for} \enspace -\frac{5}{4} \le u \le -1\\
0 \enspace &\mbox{for} \enspace u \ge -1
\end{cases},
\end{align*}
where $p(u) = a u^3 + b u^2 + c u + d$ is such that $F^{\epsilon}(x,\cdot)\in C^1(-3,\infty)$ for all $x \in \Omega$. The (unknown) exact solution of this problem takes values between $0$ and $-1.75$.

The numerical experiments presented in this section were performed with a little different implementation of the localization strategy than the one described in Section \ref{ss:localization}. We used the localized basis functions proposed in \cite{HP12}, which have the completely same analytical properties than (\ref{e:Tnodallocal})-(\ref{e:modeldiscrete}), with the only difference that they are computed with respect to unit vectors instead of gradients of basis functions in order to slightly stabilize the computations. The tolerance $tol$ in the Newton algorithm is set to $10^{-10}$. We keep the resolution of the (uniformly refined) fine grid fixed with $h=2^{-6}<\epsilon$. The computations were made for four different coarse grid resolutions $H=2^{-2}$,. \hspace{0pt}.., $2^{-5}$. For given $H$, we guess the number of coarse layers by $\log(H^{-1})$. By $\log$ we mean the logarithm to the basis $e$. For $H=2^{-k}$, $k=2,. . .5$ we obtain $\log(4)\approx1.386$, $\log(8)\approx2.08$, $\log(16)\approx 2.77$ and $\log(32)\approx3.47$. Optimistically rounding we set the number of coarse layers to $1.5$ for $H=2^{-2}$, $2$ for $H=2^{-3}$, $2.5$ for $H=2^{-4}$ and $3$ for $H=2^{-5}$. The corresponding results are depicted in Table \ref{table-layers-results}.
\begin{table}[t]
\caption{Results for fine grid with $\epsilon > h=2^{-6}\approx 0.016 > \epsilon^{\frac{3}{2}}$ which resolves the oscillations of the linear term, but not the oscillations of the nonlinear term. We observe an average EOC of 2.21 for the $L^2$-error and an average EOC of 1.09 for the $H^1$-error.}
\label{table-layers-results}
\begin{center}
\begin{tabular}{|c|c|c|c|c|}
\hline $H$      & coarse layers& fine layers & $\|\umshk - u_h\|_{L^2(\Omega)}$ & $\|\umshk - u_h\|_{H^1(\Omega)}$ \\
\hline
\hline $2^{-2}$ & 1.5 & 24 & 0.0299 & 0.5331 \\
\hline $2^{-3}$ & 2    & 16 & 0.0075 & 0.2825  \\
\hline $2^{-4}$ & 2.5 & 12 & 0.0017 & 0.1213 \\
\hline $2^{-5}$ & 3    & 8   & 0.0003 & 0.0550 \\
\hline
\end{tabular}\end{center}
\end{table}
We observe that the proportionality coefficient in the choice of the diameter of the patches O$(\mbox{\rm diam}(\omega_{j,k}))\sim H \log(\|H^{-1}\|_{L^\infty(\Omega)})$ can chosen to be on $1$ without suffering from pre-asymptotic effects. In fact, we obtain an experimental order of convergence (EOC) of $2.21$ for the $L^2$-error and an EOC of 1.09 for the $H^1$-error. The patches remain small and computational demand for solving the local problems remains very small. For further numerical studies of the method and the choice of patch sizes in the linear case, we refer to \cite{MaPe12}.

$\\$
{\bf{Acknowledgements.}} We would like to thank the anonymous reviewers for their helpful suggestions and their accurate feedback that helped us to improve the paper.

\end{document}